\newtheorem{thm}{Theorem}
\newtheorem{lem}{Lemma}
\newtheorem{defn}{Definition}
\newtheorem{cor}{Corollary}
\newcommand \C{\mathcal{C}}
\newcommand \D{\mathcal{D}}
\newcommand \mR{\mathbb{R}}
\newcommand \mN{\mathbb{N}}
\newcommand \V{\varepsilon}
\newcommand{\Spec}{\operatorname{Spec}}
\newcommand{\Gen}{\operatorname{Gen}}
\newcommand{\ev}{\operatorname{ev}}
\newcommand{\supp}{\operatorname{supp}}
\begin{document}

\title{On real-valued homomorphisms in countably generated differential structures}
\author{Micha{\l} Jan Cukrowski, Zbigniew Pasternak-Winiarski, Wies{\l}aw Sasin
\footnote{Faculty of Mathematics and Information Science,  Warsaw
University of Technology, Pl. Politechniki 1, 00-661 Warsaw,
POLAND}}
\maketitle

m.cukrowski@mini.pw.edu.pl, pastwin@mini.pw.edu.pl,
wsasin@mini.pw.edu.pl\\

\begin{abstract}
Real valued homomorphisms on the algebra of smooth functions on a
differential space are described. The concept of generators of
this algebra is emphasized in this description.\bigskip

\noindent {\bf Key words and phrases:} Differential spaces,
smoothly real-compact, spectrum, evaluation.

\noindent {\bf 2000 AMS Subject Classification Code:} Primary
54C40, 14E20; Secondary 46E25, 20C20.
\end{abstract}

\section{Introduction}
When all the real homomorphisms defined on an algebra of real
functions defined on a space are evaluations then we say that the
space is smoothly real-compact. There are many articles stating this
property for various spaces. In \cite{prusell}, \cite{ercan} it is
shown that the spaces of real continuous functions on $\mR$ and
$\mR^n$ are smoothly real-compact. In \cite{arias} this property has
been shown for the spaces of functions of class $C^k$
$(k=1,\dots,\infty)$ on separable Banach spaces. Much information
about this topic can be found in \cite{gonz}. The most important
from the point of view of Sikorski spaces is the article
\cite{michor} since it discusses smooth real-compactness of smooth
spaces which are a wider category than the Sikorski spaces. Many
conditions for those spaces to be smoothly real-compact are given
there. In \cite{adam} and \cite{kriegl1} many important results were
obtained for a very wide class of algebras. In our article we
emphasize the concept of generators of a differential space. We use
techniques suitable for Sikorski spaces. Real valued homomorphisms
are classified by their values on generators.
\section{Basic concepts and definitions}

Let $M$ be a nonempty set and $\C$ a set of real functions on $M$.
We introduce on $M$ a topology $\tau_\C$, the weakest topology in
which the functions from $\C$ are continuous. We say that the set
$\C$ is \emph{closed with respect to superposition} if all functions
of a form $\omega\circ (f_1,\dots,f_n)$ where $f_1,\dots, f_n\in \C,
\quad \omega\in C^\infty(\mR^n)$, $n\in \mN$, are in $\C$. Adding to
$\C$ all the functions of this form we obtain what we will call the
superposition closure of $\C$, denoted by $sc\C$ following
Waliszewski \cite{wal1}. For any $A\subseteq M$ by $\C_A$ we denote
the set of all functions $f$ on $A$ such that for any $p\in A$ there
exists an open neighborhood $U\in \tau_\C$ of $p$ and a function
$g\in\C$ such that $f|_{U\cap A}=g|_{U\cap A}$. If $\C=\C_M$ then we
say that $\C$ is \emph{closed with respect to localization}
\cite{sik1}. We call the set of real functions $\C$ on a nonempty
set $M$  a
\emph{differential structure} if it is:\\
i) closed with respect to superposition, $\C=sc\C$,\\
ii) closed with respect to localization, $\C=\C_M$.\\
A differential structure is always an algebra with unity and
contains all constant functions.
\begin{defn}
A pair $(M,\C)$ is said to be a differential space if $M$ is a
nonempty set and $\C$ a differential structure on it.
\end{defn}
We define a \emph{differential subspace} of a differential space
$(M,\C)$ to be any pair $(A,\C_A)$ where $A\subseteq M$, $A\neq
\emptyset$.
\begin{defn}
The differential structure $\C$ is generated by a set of functions
$\C_0$ if\\ $\C=(sc\C_0)_M$.
\end{defn}
Thus $\C$ is the smallest differential structure that contains
$\C_0$. Sometimes we write $\C=\Gen\C_0$. If $\C=\Gen\C_0$ then for
any $f\in \C$ and any point $p\in M$ there exists an open
neighborhood  $U\in \tau_{\C}$ of $p$ and functions
$f_1,\dots,f_n\in \C_0,\quad \omega\in C^\infty(\mR^n)$, $n\in \mN$
such that $f|_U=\omega\circ(f_1,\dots,f_n)|_U$. We say that the
differential space $(M,\C)$ is \emph{finitely generated} if it is
generated by a finite set of a real functions. A differential space
is \emph{countably generated} if it is generated by a countable set
of real functions but it is not finitely generated.

We denote by $(\mR^I,\V_I)$ the differential space with the
structure $\V_I$ generated by the set of projections $\C_0=\{\pi_i
:i\in I\}$, where $\pi_i:\mR^I\rightarrow \mR$ is defined by
$\pi_i(x)=x_i$ for $x=(x_i)_{i\in I}$. This is a generalization of
the Cartesian space $(\mR^n,\V_n)$ where $\V_n=C^\infty(\mR^n)$.

The \emph{spectrum} of an algebra $\C$ is the set
$$\Spec\C=\{\chi:\C\rightarrow \mR\},$$ where $\chi$ is a homomorphism that preserves unity. \\
Evaluation of the algebra $\C$ at a point $p\in M$ is the
homomorphism $\chi\in \Spec\C$ given by
\begin{equation}
\chi(f)=f(p) \quad \forall f\in \C.
\end{equation}
We will denote it by $\ev_p$. We define the mapping\\
$\ev:M\rightarrow \Spec\C$  by the formula:
\begin{equation}
\ev(p)=\ev_p.
\end{equation}
\begin{defn}
(\cite{michor}) We say that a differential space $(M,\C)$ is
smoothly real-compact if any $\chi\in \Spec\C$ is evaluation at some
point $p\in M$.
\end{defn}
From this definition it follows that the space $(M,\C)$ is smoothly
real-compact  when the mapping $\ev$ is a surjection. For any $f\in
\C$ we define the function $\hat{f}:\Spec\C\rightarrow \mR$ by
\begin{equation}
\hat{f}(\chi)=\chi(f) \quad \forall \chi\in \Spec\C.
\end{equation}
The set of all functions of the form $\hat{f}$ will be denoted by
$\hat{\C}$. Define $\tau:\C\rightarrow \hat\C$ by
\begin{equation}
\tau(f)=\hat{f}\quad \forall f\in \C.
\end{equation}
The mapping $\tau$ is an isomorphism between the algebra $\C$ and
the algebra $\hat{\C}$.
\section{Main results}
\begin{lem} The differential space $(\mR^n,\V_n)$ is smoothly real-compact.\label{lem:rn}
\end{lem}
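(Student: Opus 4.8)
The plan is to show that an arbitrary $\chi\in\Spec\V_n$ coincides with $\ev_p$ for a single point $p\in\mR^n$ that $\chi$ itself prescribes through its action on the coordinate projections. Concretely, I would first set $p_i=\chi(\pi_i)$ for $i=1,\dots,n$ and let $p=(p_1,\dots,p_n)$; this is legitimate because each $\chi(\pi_i)$ is an honest real number. The goal is then to verify that $\chi(f)=f(p)$ for every $f\in\V_n=C^\infty(\mR^n)$, which is exactly the assertion $\chi=\ev_p$ and hence the surjectivity of $\ev$.

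The engine of the argument is Hadamard's lemma. For $f\in C^\infty(\mR^n)$ and the fixed point $p$ I would write
\[
 f(x)=f(p)+\sum_{i=1}^n\bigl(\pi_i(x)-p_i\bigr)\,g_i(x),\qquad g_i(x)=\int_0^1\frac{\partial f}{\partial x_i}\bigl(p+t(x-p)\bigr)\,dt,
\]
and check that each $g_i$ lies in $C^\infty(\mR^n)$ by smoothness under the integral sign. Thus inside the algebra $\V_n$ one has the identity $f=f(p)\cdot 1+\sum_{i=1}^n(\pi_i-p_i\cdot 1)\,g_i$, expressing $f$ through the generators $\pi_i$ and a finite collection of smooth coefficients.

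Applying $\chi$ and using that it is additive, multiplicative and unital, I would compute
\[
 \chi(f)=f(p)\,\chi(1)+\sum_{i=1}^n\chi(\pi_i-p_i\cdot 1)\,\chi(g_i)=f(p)+\sum_{i=1}^n\bigl(\chi(\pi_i)-p_i\bigr)\chi(g_i).
\]
Since $\chi(\pi_i)=p_i$ by the very choice of $p$, every summand vanishes and $\chi(f)=f(p)$. As $f$ was arbitrary, this gives $\chi=\ev_p$, so $(\mR^n,\V_n)$ is smoothly real-compact.

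The one point demanding care, and where I expect the only real friction, is the equality $\chi(c\cdot 1)=c$ for real constants $c$, used implicitly for the terms $f(p)\cdot 1$ and $p_i\cdot 1$. If the elements of $\Spec\V_n$ are $\mR$-algebra homomorphisms this is immediate from linearity together with $\chi(1)=1$; if only a unital ring homomorphism were assumed, I would first establish monotonicity by observing that for $g\ge 0$ and rational $\delta>0$ the function $\sqrt{g+\delta}$ is smooth, whence $\chi(g)+\delta=\bigl(\chi(\sqrt{g+\delta})\bigr)^2\ge 0$, forcing $\chi(g)\ge 0$ and then $\chi(c\cdot 1)=c$ by squeezing $c$ between rationals. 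I would also stress that the whole argument hinges on Hadamard's lemma producing a \emph{finite} sum, which is precisely what finiteness of the dimension $n$ guarantees; the breakdown of such a finite decomposition is exactly what makes the infinite-dimensional analogue genuinely harder.
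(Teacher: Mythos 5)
Your proposal is correct and follows exactly the paper's argument: define $p$ by $p_i=\chi(\pi_i)$, apply Hadamard's lemma to write $f=f(p)+\sum_i(\pi_i-p_i)g_i$, and let $\chi$ annihilate each summand. Your extra remark justifying $\chi(c\cdot 1)=c$ addresses a step the paper silently assumes, but the route is the same.
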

\begin{proof}
Let $\chi\in \Spec\V_n$. We define $p\in \mR^n$ by
$p_i:=\chi(\pi_i)$ for $i=1,\dots,n$. We will show that $\chi=ev_p$.
It is known that any $f\in \V_n$ can be represented as
\begin{equation}
f=f(p)+\sum_{i=1}^ng_i(\pi_i-p_i)\quad \text{for} \quad
g_1,\dots,g_n\in \varepsilon_n,
\end{equation}
where the functions $g_i$ satisfy $g_i(p)=\partial_if(p)$. Then
\[\chi(f)=\chi(f(p))+\sum_{i=1}^n\chi(g_i)(\chi(\pi_i)-\chi(p_i))=f(p)+\sum_{i=1}^n\chi(g_i)(p_i-p_i)=f(p)\].
Therefore $\chi(f)=f(p)$ for all $f\in \V_n$.
\end{proof}
Now we prove:
\begin{lem}
Every differential subspace of the differential space $(\mR^n,\V_n)$
is smoothly real-compact.\label{lem:findimsub}
\end{lem}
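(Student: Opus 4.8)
The plan is to pull a spectrum element on the subspace back to a spectrum element on the ambient space $(\mR^n,\V_n)$, apply Lemma~\ref{lem:rn}, and then establish two things that the lemma does not give for free: that the point produced actually lies in $A$, and that the homomorphism agrees with evaluation at that point even on the merely locally-restricted functions which make up $(\V_n)_A$.

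First I would observe that the restriction map $r\colon\V_n\to(\V_n)_A$, $r(f)=f|_A$, is a unity-preserving algebra homomorphism, so for any $\chi\in\Spec(\V_n)_A$ the composite $\chi\circ r$ lies in $\Spec\V_n$. By Lemma~\ref{lem:rn} there is a point $p\in\mR^n$ with $(\chi\circ r)(f)=f(p)$ for every $f\in\V_n$; explicitly $p_i=\chi(\pi_i|_A)$. Thus $\chi$ already agrees with $\ev_p$ on every \emph{globally} restricted function.

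The first genuine step is to prove $p\in A$. Suppose not, and consider the global function $d=\sum_{i=1}^n(\pi_i-p_i)^2\in\V_n$, whose restriction $d|_A$ is strictly positive on $A$. Around each $q\in A$ the reciprocal $1/d$ is smooth (since $d(q)>0$), hence agrees on a neighborhood with some global $g\in\V_n$; by the very definition of $(\V_n)_A$ this exhibits $1/(d|_A)\in(\V_n)_A$. So $d|_A$ is invertible in $(\V_n)_A$, forcing $\chi(d|_A)\neq0$; but $\chi(d|_A)=d(p)=0$, a contradiction. I expect this to be the crux of the argument, since it is precisely where closure of the subspace structure under localization — rather than any property of $\V_n$ itself — does the work.

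Finally, for an arbitrary $h\in(\V_n)_A$ I would use that $h$ agrees near $p$ with a global $g\in\V_n$: pick an open $U\ni p$ with $h|_{U\cap A}=g|_{U\cap A}$ and a bump function $\phi\in C^\infty(\mR^n)$ with $\phi(p)=1$ and $\supp\phi\subset U$. Then $\phi|_A\cdot(h-g|_A)$ vanishes identically on $A$ (on $U\cap A$ because $h=g$ there, and off $\supp\phi$ because $\phi=0$ there), so applying $\chi$ and using $\chi(\phi|_A)=\phi(p)=1$ gives $\chi(h-g|_A)=0$. Since $h(p)=g(p)=\chi(g|_A)$, this yields $\chi(h)=h(p)$, i.e. $\chi=\ev_p$ with $p\in A$, as required.
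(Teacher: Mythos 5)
Your proposal is correct and follows essentially the same route as the paper's own proof: pull $\chi$ back along restriction to get an element of $\Spec\V_n$, apply Lemma~\ref{lem:rn} to get $p$, rule out $p\notin A$ via the invertibility of $\sum(\pi_i-p_i)^2$ restricted to $A$, and handle general elements of $(\V_n)_A$ with a bump function supported near $p$. Your write-up is in fact slightly more careful than the paper's at the two points it glosses over (why $1/(d|_A)$ lies in the localized structure, and the precise support condition on the bump function), but the argument is the same.
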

\begin{proof}
Let $(M,\C)$ be a differential subspace of $(\mR^n,\V_n)$. The
inclusion mapping $\iota_M:M\rightarrow \mR^n$ is smooth and
therefore $\iota_M^*:\V_n\rightarrow M$ is a homomorphism. From the
definition we know that $\iota_M^*(f)=f|_M$ for all $f\in \V_n$. For
any $\chi \in \Spec\C$ we have $\chi\circ\iota_M^*\in \Spec\V_n$.
From Lemma \ref{lem:rn} we know that there exists $p\in \mR^n$ such
that $\chi\circ\iota_M^*(f)=\chi(\iota_M^*(f))=\chi(f|_M)=f(p)$ for
all $f\in \V_n$. Suppose that $p\notin M$. Define $\omega\in \V_n$
by
\begin{equation}
\omega(x_1,\dots,x_n)=(x_1-p_1)^2+\dots+(x_n-p_n)^2.
\end{equation}
Since $\omega|_M>0$ we have $\frac{1}{\omega|_M}\in \C$. We also
know that $\chi((\omega|_M)(\frac{1}{\omega|_M}))=\chi(1)=1$, and
$(\chi\circ\iota_M^*)(\omega)=\chi(\omega|M)=\omega(p)=0$. This is a
contradiction.

We will show that $\chi=ev_p$. Let $f\in \C$. There exists an open
neighborhood $U\in \tau_{\V_n}$ of  $p$ and a function $\kappa\in
\V_n$ such that $f|_{U\cap M}=\kappa|_{U\cap M}$. From \cite{sik} we
know that there exists a bump function $\phi\in \V_n$ with
$\phi(p)=1$, $\phi|_{M\cap U}>0$ and  $\phi|_{\mR^n-(M\cap U)}=0$.
From these properties it follows that $(f-\kappa|_M)\phi|_M=0$. Then
$\chi((f-\kappa|_M)\phi|_M)=(\chi(f)-\chi(\kappa|_M))\chi(\phi|_M)=0$.
But $\chi(\phi|_M)=(\iota_M\circ\chi)(\phi)=\phi(p)=1$ so
$\chi(f)=\chi(\kappa|_M)=\kappa(p)=f(p)$. We have shown that
$\chi(f)=f(p)$ for all $f\in\C$.
\end{proof}
If the differential structure $\C$ of the differential space
$(M,\C)$ is generated by a set of functions $\C_0$ then we can
define a mapping $\phi:M\rightarrow \mR^{\C_0}$ by
\begin{equation}
\phi(p)(f)=f(p), \quad f\in \C_0.
\end{equation}
We will call this mapping the \emph{generator embedding}. We can
prove the following:
\begin{lem}
A differential space $(M,\C)$ with $\C=\Gen\C_0$ is smoothly
real-compact iff the differential space $(\phi(M),(\V_I)_{\phi(M)})$
for $I=|\C_0|$ is smoothly real-compact.\label{lem:embed}
\end{lem}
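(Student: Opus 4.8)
The plan is to reduce the statement to a single structural fact: the generator embedding $\phi$ induces an algebra \emph{isomorphism} between $(\V_I)_{\phi(M)}$ and $\C$, after which the equivalence follows by transporting the notion of evaluation across this isomorphism. Write $\C_0=\{f_i:i\in I\}$ with $I=|\C_0|$, so that the projection $\pi_i$ on $\mR^I$ satisfies $\pi_i\circ\phi=f_i$ for every $i\in I$. First I would check that $\phi:(M,\C)\to(\mR^I,\V_I)$ is smooth. Since $\V_I=\Gen\{\pi_i:i\in I\}$ and $\phi^*(\pi_i)=\pi_i\circ\phi=f_i\in\C_0\subseteq\C$, smoothness follows because pullback commutes with superposition and with localization (using that $\phi$ is continuous for $\tau_\C$ and $\tau_{\V_I}$). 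Consequently $\phi$ restricts to a smooth map onto the subspace $(\phi(M),(\V_I)_{\phi(M)})$, and the pullback $\phi^*\colon (\V_I)_{\phi(M)}\to\C$, $\phi^*(g)=g\circ\phi$, is a well-defined unital algebra homomorphism.

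The heart of the argument is to show that $\phi^*$ is an isomorphism. Surjectivity uses $\C=\Gen\C_0$: any $f\in\C$ is, near each point, of the form $\omega\circ(f_{i_1},\dots,f_{i_n})=\phi^*\bigl(\omega\circ(\pi_{i_1},\dots,\pi_{i_n})\bigr)$, so $f$ is locally a pullback of a function in $(\V_I)_{\phi(M)}$, and the closure of $(\V_I)_{\phi(M)}$ under localization then yields a global preimage. Injectivity is immediate: if $\phi^*(g)=g\circ\phi=0$, then $g$ vanishes at every point $\phi(p)$, i.e.\ on all of $\phi(M)$; since $g$ is a function whose domain is exactly $\phi(M)$, we get $g=0$. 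I expect the main obstacle to be the surjectivity bookkeeping, namely matching the local representations of an $f\in\C$ on $M$ with local representations of functions on $\phi(M)$ and then invoking closure under localization on both sides to patch these into a single global preimage.

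Granting that $\phi^*$ is an isomorphism, it induces a bijection $\Spec\C\to\Spec(\V_I)_{\phi(M)}$, $\chi\mapsto\chi\circ\phi^*$, with inverse $\psi\mapsto\psi\circ(\phi^*)^{-1}$. I would then verify that this bijection carries evaluations to evaluations: for $p\in M$ and $g\in(\V_I)_{\phi(M)}$ one has $(\ev_p\circ\phi^*)(g)=g(\phi(p))=\ev_{\phi(p)}(g)$, so $\ev_p\mapsto\ev_{\phi(p)}$. Since every point of $\phi(M)$ has the form $\phi(p)$, the evaluations of $(\V_I)_{\phi(M)}$ are precisely the images of the evaluations of $\C$ under this correspondence.

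Finally I would assemble the equivalence. If $(M,\C)$ is smoothly real-compact and $\psi\in\Spec(\V_I)_{\phi(M)}$, then $\psi\circ(\phi^*)^{-1}\in\Spec\C$ equals some $\ev_p$, whence $\psi=\ev_p\circ\phi^*=\ev_{\phi(p)}$ is an evaluation at a point of $\phi(M)$. Conversely, if $(\phi(M),(\V_I)_{\phi(M)})$ is smoothly real-compact and $\chi\in\Spec\C$, then $\psi:=\chi\circ\phi^*=\ev_q$ for some $q=\phi(p)\in\phi(M)$; writing each $f\in\C$ as $f=\phi^*(g)$ gives $\chi(f)=\psi(g)=g(\phi(p))=(\phi^*g)(p)=f(p)$, so $\chi=\ev_p$. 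This establishes both implications and hence the stated equivalence.
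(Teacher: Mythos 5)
Your proposal is correct and follows essentially the same route as the paper: both arguments rest on the fact that the pullback $\phi^*\colon(\V_I)_{\phi(M)}\to\C$ is an algebra isomorphism and then transport homomorphisms across it, checking that evaluations at $p$ correspond to evaluations at $\phi(p)$. If anything, you supply more justification than the paper does for the isomorphism claim (the paper simply asserts it, splitting only into the cases where $\C_0$ does or does not separate points), though the well-definedness of the global preimage in your surjectivity step -- which requires that functions in $\C$ cannot separate points that the generators fail to separate -- is the one detail you flag but do not fully carry out.
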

\begin{proof}
If $\C_0$ separates the points of $M$ then $\phi$ is a
diffeomorphism onto its image so the result is obvious. So assume
that $\C_0$ does not separate points. Then $\bar\phi:M\rightarrow
\phi(M)$ where $\bar\phi(p)=\phi(p)$ is surjective but not
injective. Set $F:=\bar\phi$. We know that
$F^*:(\V_I)_{\phi(M)}\rightarrow \C$ is an isomorphism of algebras.
If $(M,\C)$ is smoothly real-compact then for any $\nu\in
\Spec(\V_I)_{\phi(M)}$ there exists $\mu\in \Spec\C$ such that
$\mu=\nu\circ(F^*)^{-1}$. Then for any $g\in (\V_I)_{\phi(M)}$,
$\nu(g)=\mu(F^*(g))=\mu(g\circ F)=g(F(p))$. So if $\mu=ev_p$ then
$\nu=ev_{F(p)}$.

If $(\phi(M),(\V_I)_{\phi(M)})$ is smoothly real-compact then for
any $\mu\in \Spec\C$ there exists $\nu\in \Spec(\V_I)_{\phi(M)}$
defined by $\nu=\mu\circ F^*$, so $\mu=\nu\circ (F^*)^{-1}$.
Therefore for any $f\in \C$ we have $\mu(f)=(\nu\circ
(\phi^*)^{-1})(f)=\nu((\phi^*)^{-1}(f))=((\phi^*)^{-1}(f))(q)=f(p)$
for any $p\in F^{-1}(q)$. So if $\nu=ev_q$ then $\mu=ev_p$ for all
$p\in F^{-1}(q)$.
\end{proof}
From the last lemma we know that it is sufficient to work on
subspaces of  Cartesian spaces.
\begin{cor}
Let $(M,\C)$ be a differential space with $\C=\Gen\C_0$ for some
finite $\C_0$. Then $(M,\C)$ is smoothly
real-compact.\label{cor:sub}
\end{cor}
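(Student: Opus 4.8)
The plan is to reduce the statement directly to the two preceding lemmas by exploiting the finiteness of the generating set. Since $\C_0$ is finite, the index set $I=|\C_0|$ is a finite number, say $n$, so that the target space $\mR^I$ of the generator embedding is genuinely the finite-dimensional Cartesian space $\mR^n$ and $\V_I=\V_n$. First I would form the generator embedding $\phi:M\rightarrow\mR^n$ and observe that its image $\phi(M)$ is a subset of $\mR^n$; consequently the pair $(\phi(M),(\V_n)_{\phi(M)})$ is precisely a differential subspace of the Cartesian space $(\mR^n,\V_n)$ in the sense defined after Definition 1.

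Next I would invoke Lemma \ref{lem:findimsub}, which guarantees that every differential subspace of $(\mR^n,\V_n)$ is smoothly real-compact. Applying it to the subset $A=\phi(M)$ yields that $(\phi(M),(\V_n)_{\phi(M)})$ is smoothly real-compact. Finally, Lemma \ref{lem:embed} asserts that $(M,\C)$ is smoothly real-compact if and only if $(\phi(M),(\V_I)_{\phi(M)})$ is; since we have just established the latter, the conclusion for $(M,\C)$ follows immediately.

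Because all the substantive work has already been carried out in Lemmas \ref{lem:rn}, \ref{lem:findimsub}, and \ref{lem:embed}, there is no genuine obstacle in this corollary: it is a direct specialization of Lemma \ref{lem:embed} to the finite case, in which the associated differential space lives inside a finite-dimensional Cartesian space and is therefore covered by Lemma \ref{lem:findimsub}. The only point requiring a moment's care is the identification $I=|\C_0|=n<\infty$, which ensures that $\mR^I$ is honestly $\mR^n$ rather than an infinite product, so that Lemma \ref{lem:findimsub}, stated for subspaces of finite-dimensional $(\mR^n,\V_n)$, actually applies.
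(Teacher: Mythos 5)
Your proposal is correct and follows exactly the paper's argument: use the generator embedding into $\mR^n$ (finite since $\C_0$ is finite), apply Lemma \ref{lem:findimsub} to the image as a differential subspace of $(\mR^n,\V_n)$, and transfer back via the equivalence in Lemma \ref{lem:embed}. The paper's own proof is just a one-line version of this same reduction.
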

\begin{proof}
By using the generators $\C_0$ we can embed $(M,\C)$ into
$(\mR^{\C_0},(\V_{\C_0})_{\phi(M)})$ and then from Lemmas
\ref{lem:findimsub}, \ref{lem:embed} we derive that $(M,\C)$ is
smoothly real-compact.
\end{proof}
From Corollary \ref{cor:sub} we obtain:
\begin{lem}
Let $(M,\C)$ be a differential space. Any $\chi\in \Spec\C$
satisfies the following condition:
\begin{equation}
\chi(\omega\circ (f_1,\dots,f_n))=\omega(\chi(f_1),\dots,\chi(f_n)),
\end{equation}
for all  $\omega\in \V_n$ and $f_1,\dots,f_n\in \C$, $n\in
\mN$.\label{lem:comp}
\end{lem}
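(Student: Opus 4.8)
The plan is to reduce the identity to an evaluation on a finitely generated substructure, where Corollary \ref{cor:sub} already supplies smooth real-compactness. So I would fix $\chi\in\Spec\C$, $\omega\in\V_n$ and $f_1,\dots,f_n\in\C$, set $\C_0:=\{f_1,\dots,f_n\}$, and introduce the differential structure $\D:=\Gen\C_0$ generated on $M$ by this finite family. The target identity concerns $\omega\circ(f_1,\dots,f_n)$, which lies in $\C$ because $\C$ is superposition-closed, so the left-hand side is well defined.

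The first step is to check that $\D\subseteq\C$. Since $\C$ is superposition-closed and contains $\C_0$, we have $sc\C_0\subseteq\C$, and consequently the topology $\tau_{sc\C_0}$ entering the localization that defines $\D=(sc\C_0)_M$ is coarser than $\tau_\C$. Hence every local representation $f|_U=g|_U$, with $U\in\tau_{sc\C_0}$ and $g\in sc\C_0$, that witnesses membership of $f$ in $\D$ is simultaneously such a representation with $U\in\tau_\C$ and $g\in\C$; closure of $\C$ under localization, $\C=\C_M$, then forces $f\in\C$, i.e. $\D\subseteq\C$.

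Next I would restrict $\chi$ to $\D$. Because $\D\subseteq\C$ and $\chi$ is a unity-preserving algebra homomorphism, $\chi|_\D\in\Spec\D$. Now $(M,\D)$ is finitely generated, so Corollary \ref{cor:sub} shows it is smoothly real-compact; therefore there exists $p\in M$ with $\chi|_\D=\ev_p$, that is, $\chi(g)=g(p)$ for every $g\in\D$.

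Finally I would apply this to the elements of $\D$ at hand. Each $f_i$ lies in $\D$, as does the superposition $\omega\circ(f_1,\dots,f_n)$ since $\D$ is closed under superposition. Evaluating at $p$ gives $\chi(f_i)=f_i(p)$ and $\chi(\omega\circ(f_1,\dots,f_n))=\omega(f_1(p),\dots,f_n(p))=\omega(\chi(f_1),\dots,\chi(f_n))$, which is exactly the asserted identity. The only delicate point is the first step: one must keep track of the fact that the localization defining $\Gen\C_0$ uses the coarser topology $\tau_{sc\C_0}$ rather than $\tau_\C$, and verify that the containment $\D\subseteq\C$ survives this. Everything afterwards is a direct application of Corollary \ref{cor:sub}.
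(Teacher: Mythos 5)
Your proof is correct and follows essentially the same route as the paper: both arguments reduce the identity to Corollary \ref{cor:sub} applied to the finitely generated object determined by $f_1,\dots,f_n$, obtain a point $p$ at which $\chi$ evaluates there, and read off the claim. The paper realizes that object as the image $F(M)\subseteq\mR^n$ of $F=(f_1,\dots,f_n)$ and works with the pullback $\chi\circ F^*$, whereas you work with the substructure $\D=\Gen\{f_1,\dots,f_n\}\subseteq\C$ on $M$ itself (the two pictures being identified by Lemma \ref{lem:embed}); your check that $\D\subseteq\C$ despite the coarser localization topology is sound.
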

\begin{proof}
Let $\beta_1,\dots,\beta_n\in \C$ be arbitrary functions. We define
the mapping \\$F:(M,\C)\rightarrow (\mR^n,\V_n)$ by:
$$
F(p)=(\beta_1(p),\dots,\beta_n(p)),\quad p\in M.
$$
This mapping is smooth and it is onto its image. Therefore the
mapping\\ $F^*:(\V_n)_{F(M)}\rightarrow \C$ is a homomorphism. For
any $\chi\in \Spec\C$ we have $\chi\circ F^*\in
\Spec((\V_n)_{F(M)})$. From  Corollary \ref{cor:sub} we know that
there exists $q\in F(M)$ such that $\chi\circ F^*=ev_q$ for some
$q\in F(M)$. Also there exists $p\in M$ such that
$$
(\chi\circ F^*)(\omega|_{F(M)})=ev_{F(p)}(\omega|_{F(M)}) \quad
\forall \omega\in \V_n.
$$
We can rewrite this in the form
$$
\chi(\omega\circ
F)=\omega(F(p))=\omega(\beta_1(p),\dots,\beta_n(p))\quad \forall
\omega \in \V_n. $$ By setting $\omega=\pi_i$, $i=1,\dots,n,$ we
obtain $\chi(\beta_i)=\chi(\pi_i\circ F)=\pi_i(F(p))=\beta_i(p)$ and
finally $\chi(\omega\circ
(\beta_1,\dots,\beta_n))=\omega(\chi(\beta_1),\dots,\chi(\beta_n))$
for all  $\omega\in \V_n$.
\end{proof}
Now we prove the following:
\begin{lem}
Let $(M,\C)$ be a differential space such that $\C=\Gen\C_0$ and let
$\chi\in \Spec\C$. If $\chi|_{\C_0}=ev_p|_{\C_0}$ then
$\chi=ev_p$.\label{lem:5}
\end{lem}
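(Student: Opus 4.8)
The plan is to mirror the two-stage construction $\C = (sc\C_0)_M$: first I would show that $\chi$ and $\ev_p$ agree on the superposition closure $sc\C_0$, and then I would push this agreement through the localization to all of $\C$. As a preliminary I would record that $\tau_\C = \tau_{\C_0}$. Indeed $\C_0 \subseteq \C$ gives $\tau_{\C_0} \subseteq \tau_\C$, while conversely every $g \in sc\C_0$ is a composition $\omega \circ (g_1, \dots, g_m)$ of $\tau_{\C_0}$-continuous functions and every $f \in \C$ is locally of this form, so each $f \in \C$ is $\tau_{\C_0}$-continuous and $\tau_\C \subseteq \tau_{\C_0}$. This equality is what will let me build neighborhoods of $p$ out of the generators alone.

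The first stage would follow directly from Lemma \ref{lem:comp}. Writing $g \in sc\C_0$ as $g = \omega \circ (g_1, \dots, g_m)$ with $g_i \in \C_0$ and $\omega \in \V_m$, Lemma \ref{lem:comp} yields $\chi(g) = \omega(\chi(g_1), \dots, \chi(g_m))$, and the hypothesis $\chi|_{\C_0} = \ev_p|_{\C_0}$ turns the right-hand side into $\omega(g_1(p), \dots, g_m(p)) = g(p)$. Hence $\chi$ and $\ev_p$ coincide on $sc\C_0$.

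For the second stage I would take an arbitrary $f \in \C = (sc\C_0)_M$ and use the generation property to find a $\tau_\C$-open neighborhood $U$ of $p$ and a $g \in sc\C_0$ with $f|_U = g|_U$; then $\chi(g) = g(p) = f(p)$ by the first stage, so it suffices to prove $\chi(h) = 0$ for $h := f - g$, which vanishes on $U$. Using $\tau_\C = \tau_{\C_0}$ I would shrink $U$ to a basic neighborhood $W = \bigcap_{j=1}^k f_j^{-1}\big((f_j(p) - \V, f_j(p) + \V)\big) \subseteq U$ with $f_1, \dots, f_k \in \C_0$, and choose a smooth $\theta \in \V_k$ equal to $1$ at $(f_1(p), \dots, f_k(p))$ and supported in the associated box, so that $\phi := \theta \circ (f_1, \dots, f_k) \in sc\C_0$ satisfies $\phi(p) = 1$ and $\phi = 0$ outside $W$. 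Since $h$ vanishes on $U \supseteq W$ and $\phi$ vanishes outside $W$, the product $h\phi$ is identically zero, so $\chi(h)\chi(\phi) = \chi(h\phi) = 0$; and because $\phi \in sc\C_0$ the first stage gives $\chi(\phi) = \phi(p) = 1$, forcing $\chi(h) = 0$ and therefore $\chi(f) = f(p)$.

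The step I expect to be the crux is this localization, and in particular the requirement that the cutoff $\phi$ be assembled from generators in $\C_0$ rather than from arbitrary members of $\C$. Building $\phi$ from general functions would leave me with $\chi(\phi) = \theta(\chi(f_1), \dots, \chi(f_k))$ in which the values $\chi(f_j)$ are precisely what the lemma is trying to determine, making the argument circular. The identity $\tau_\C = \tau_{\C_0}$ is exactly what dissolves this circularity, since it provides a neighborhood basis at $p$ expressed through generators, after which $\chi|_{\C_0} = \ev_p|_{\C_0}$ evaluates $\chi(\phi)$ to $1$.
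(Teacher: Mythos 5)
Your proposal is correct and follows essentially the same route as the paper: first use Lemma \ref{lem:comp} together with $\chi|_{\C_0}=\ev_p|_{\C_0}$ to get agreement on $sc\C_0$, then kill the difference $f-\omega\circ(\beta_1,\dots,\beta_n)$ locally with a bump function built from generators. The only difference is that you spell out the construction of that bump function (via $\tau_\C=\tau_{\C_0}$ and a basic neighborhood cut from finitely many generators), a detail the paper merely asserts; this is a welcome addition, not a deviation.
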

\begin{proof}
First we will show that if $f\in sc\C_0$ then $\chi(f)=f(p)$ . From
Lemma \ref{lem:comp} we know that
$\chi(\omega\circ(\beta_1,\dots,\beta_n))=\omega(\chi(\beta_1),\dots,
\chi(\beta_n))$ for $\omega\in \V_n$ and $\beta_1,\dots,\beta_n\in
\C_0$. We also know that $\chi(\beta_i)=ev_p(\beta_i)=\beta_i(p)$.
We can write
$\chi(\omega\circ(\beta_1,\dots,\beta_n))=\omega(\beta_1(p),\dots,\beta_n(p))=\omega\circ(\beta_1,\dots,\beta_n)(p)=ev_p(\omega\circ
(\beta_1,\dots,\beta_n))$. So we see that
$\chi|_{sc\C_0}=ev_p|_{sc\C_0}$.

Now let $f\in \C$ be an arbitrary function. We know that for every
$p\in M$ there exists an open neighborhood $U\in \tau_\C$, functions
$\beta_1,\dots,\beta_n\in \C_0$ and a function $\omega\in \V_n$ such
that $f|_U=\omega\circ(\beta_1,\dots,\beta_n)|_U$. There also exists
a bump function $\psi$ which separates the point $p$ in the set $U$.
This function is constructed by compositing some function from
$\V_n$ with some generators from $\C_0$. We know that the
homomorphism $\chi$ equals evaluation at $p$ on this function, so
$\chi(\phi)=\phi(p)=1$. Now the following equality holds:
$\phi\cdot(f-\omega\circ(\beta_1,\dots,\beta_n))=0$. By applying the
homomorphism $\chi$ to this equality we obtain
$\chi(\phi)\cdot\chi(f-\omega\circ(\beta_1,\dots,\beta_n))=\chi(f)-\chi(\omega\circ(\beta_1,\dots,\beta_n))=0$
so
$\chi(f)=\chi(\omega\circ(\beta_1,\dots,\beta_n))=ev_p(\omega\circ(\beta_1,\dots,\beta_n))=f(p)=ev_p(f)$.
We see that $\chi(f)=f(p)$ for all $f\in \C$.
\end{proof}
From Lemma \ref{lem:5} we get:
\begin{cor}
The differential space $(\mR^I,\V_I)$ is smoothly real-compact.
\end{cor}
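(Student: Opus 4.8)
The plan is to apply Lemma \ref{lem:5} essentially verbatim, so almost all of the work has already been done; the only task that remains is to manufacture a suitable candidate point. Recall that $\V_I$ is, by definition, generated by the projections, so in the notation of Lemma \ref{lem:5} we take $\C_0=\{\pi_i:i\in I\}$.

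First I would fix an arbitrary $\chi\in\Spec\V_I$ and define a point $p=(p_i)_{i\in I}\in\mR^I$ by setting $p_i:=\chi(\pi_i)$ for each $i\in I$, exactly as in the proof of Lemma \ref{lem:rn}. Because coordinates of points of $\mR^I$ are subject to no constraint, this family of reals genuinely determines a point $p\in\mR^I$. I would then check that $\chi$ and $ev_p$ agree on the generators: for every $i\in I$ one has $ev_p(\pi_i)=\pi_i(p)=p_i=\chi(\pi_i)$, so $\chi|_{\C_0}=ev_p|_{\C_0}$. Lemma \ref{lem:5} now applies directly and gives $\chi=ev_p$. Since $\chi$ was arbitrary, every element of $\Spec\V_I$ is an evaluation, which is precisely the assertion that $(\mR^I,\V_I)$ is smoothly real-compact.

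I do not expect any genuine obstacle here, as Lemma \ref{lem:5} carries the entire argument; the statement is really just the instantiation of that lemma in the generic case $\C_0=\{\pi_i\}$. The one point worth a remark is that, unlike in Lemma \ref{lem:findimsub}, no separate argument is needed to guarantee that the candidate point lies in the underlying set: here the underlying set is the full product $\mR^I$, so the tuple $(\chi(\pi_i))_{i\in I}$ is automatically a point of the space, and the delicate step of excluding $p\notin M$ via a reciprocal-of-distance function simply does not arise.
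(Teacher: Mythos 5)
Your proof is correct and coincides with the paper's own argument: both define $p_i=\chi(\pi_i)$, observe that $\chi$ agrees with $\ev_p$ on the generating projections, and conclude via Lemma \ref{lem:5}. Nothing further is needed.
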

\begin{proof}
Let $\chi\in \Spec\V_I$ be any homomorphism. Define $p\in \mR^I$ by
 $p_i=\chi(\pi_i)$ for $i\in I$. Then $\chi(\pi_i)=\pi_i(p)$ so
$\chi(\pi_i)=ev_p(\pi_i)$. Since the structure $\V_I$ is generated
by the set $\{\pi_i:i\in I\}$ we see that $\chi$ is evaluation at
$p$ on the generators. From the last Lemma we derive that $\chi$ is
an evaluation on the whole $\V_I$.
\end{proof}

By using the whole $\C$ as the set of generators we can embed $M$ in
$\mR^\C$. We denote this embedding by $\iota$ so $\iota:M\rightarrow
\mR^\C$, $\iota(p)_f=f(p)$. This is a special case of a generator
embedding. We can also map $\Spec\C$ into $\mR^\C$ using the mapping
$\kappa:\Spec\C\rightarrow \mR^\C$ defined by
$\kappa(\chi)_f=\hat{f}(\chi)=\chi(f)$. It is obvious that
$\iota=\kappa\circ ev$. In \cite{michor} Kriegl, Michor and
Schachermayer have shown that $\iota(M)$ is dense in
$\kappa(\Spec\C)$ in the Tikhonov topology of $\mR^\C$. Since the
mapping $\kappa$ is a homeomorphism one can easily see:
\begin{cor}
$ev(M)$ is dense in $\Spec\C$ in the topology
$\tau_{\hat{\C}}$.\label{cor:dense}
\end{cor}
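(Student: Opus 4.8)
The plan is to deduce the statement from the density result of Kriegl, Michor and Schachermayer, transported through the map $\kappa$. First I would pin down the topology $\tau_{\hat{\C}}$ as the initial topology carried back from the Tikhonov topology of $\mR^\C$ through $\kappa$. For each $f\in\C$ let $\pi_f:\mR^\C\rightarrow\mR$ denote the $f$-th coordinate projection; then for every $\chi\in\Spec\C$ we have $(\pi_f\circ\kappa)(\chi)=\kappa(\chi)_f=\chi(f)=\hat{f}(\chi)$, so that $\hat{f}=\pi_f\circ\kappa$. Since $\tau_{\hat{\C}}$ is by definition the weakest topology on $\Spec\C$ making all the $\hat{f}$ continuous, and the Tikhonov topology is the weakest one making all the $\pi_f$ continuous, these two descriptions coincide: $\tau_{\hat{\C}}$ is exactly the topology induced on $\Spec\C$ by $\kappa$ from the product topology. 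In particular $\kappa$ is a homeomorphism onto its image $\kappa(\Spec\C)$, as already noted before the statement; injectivity of $\kappa$ holds because a unital homomorphism $\chi$ is determined by the full list of its values $\chi(f)=\hat{f}(\chi)$.

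Next I would rewrite the image of $M$. Because $\iota=\kappa\circ\ev$, we have $\iota(M)=(\kappa\circ\ev)(M)=\kappa(\ev(M))$. The cited result of \cite{michor} states that $\iota(M)$ is dense in $\kappa(\Spec\C)$ in the subspace Tikhonov topology; hence $\kappa(\ev(M))$ is dense in $\kappa(\Spec\C)$.

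Finally I would transport this density back through $\kappa$. A homeomorphism carries dense subsets to dense subsets and reflects them, so if $\kappa$ is a homeomorphism of $(\Spec\C,\tau_{\hat{\C}})$ onto $\kappa(\Spec\C)$ and $\kappa(\ev(M))$ is dense there, then, using injectivity, $\ev(M)=\kappa^{-1}(\kappa(\ev(M)))$ is dense in $\Spec\C$ with respect to $\tau_{\hat{\C}}$. This is precisely the assertion of the corollary.

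The only genuinely delicate point is the identification of $\tau_{\hat{\C}}$ with the pullback of the Tikhonov topology, that is, verifying that $\kappa$ is a homeomorphism onto its image rather than merely a continuous bijection; everything else is a formal transport of density along a homeomorphism. Since the homeomorphism property is already recorded in the excerpt, the corollary follows with essentially no further work.
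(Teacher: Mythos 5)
Your proposal is correct and follows exactly the route the paper intends: it invokes the Kriegl--Michor--Schachermayer density of $\iota(M)$ in $\kappa(\Spec\C)$, identifies $\tau_{\hat{\C}}$ with the topology pulled back through $\kappa$ from the Tikhonov topology (via $\hat{f}=\pi_f\circ\kappa$ and transitivity of initial topologies), and transports density along the resulting homeomorphism using $\iota=\kappa\circ\ev$. The paper leaves all of this implicit ("one can easily see"), so your write-up simply supplies the details of the same argument.
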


This property will allow us to prove an interesting fact about the
space $(\Spec\C,\hat{\C})$.
\begin{lem} If $(M,\C)$ is a differential space
then $(\Spec\C,\hat\C)$ is a differential space.
\end{lem}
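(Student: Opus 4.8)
The plan is to verify that $\hat\C$ satisfies the two defining properties of a differential structure on the set $\Spec\C$ equipped with the topology $\tau_{\hat\C}$: closure under superposition and closure under localization. Throughout I would exploit that $\tau:\C\to\hat\C$ is an algebra isomorphism and that $\hat f\circ\ev=f$ for every $f\in\C$; the latter shows in particular that $\ev:M\to\Spec\C$ is continuous, since the preimage of a subbasic open set $\hat f^{-1}(W)$ equals $f^{-1}(W)\in\tau_\C$.

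For superposition, fix $\omega\in\V_n$ and $\hat f_1,\dots,\hat f_n\in\hat\C$, and put $g:=\omega\circ(f_1,\dots,f_n)\in\C$. For an arbitrary $\chi\in\Spec\C$ I would compute, using Lemma \ref{lem:comp},
\[
\hat g(\chi)=\chi(\omega\circ(f_1,\dots,f_n))=\omega(\chi(f_1),\dots,\chi(f_n))=\omega(\hat f_1(\chi),\dots,\hat f_n(\chi)),
\]
so that $\hat g=\omega\circ(\hat f_1,\dots,\hat f_n)$. Since $\hat g\in\hat\C$, this yields $\omega\circ(\hat f_1,\dots,\hat f_n)\in\hat\C$ and hence $\hat\C=sc\hat\C$. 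This step is routine once Lemma \ref{lem:comp} is available.

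The substantial part is closure under localization. Let $h$ be a function on $\Spec\C$ that locally agrees with members of $\hat\C$; I must produce $f\in\C$ with $h=\hat f$. The idea is to pull $h$ back along $\ev$: set $f:=h\circ\ev$, i.e.\ $f(p)=h(\ev_p)$. To see $f\in\C$ I would use closure of $\C$ under localization. Given $p\in M$, choose an open $U\in\tau_{\hat\C}$ about $\ev_p$ and some $\hat\gamma\in\hat\C$ (with $\gamma\in\C$) such that $h|_U=\hat\gamma|_U$; then $V:=\ev^{-1}(U)\in\tau_\C$ is a neighborhood of $p$, and for $q\in V$ we have $f(q)=h(\ev_q)=\hat\gamma(\ev_q)=\ev_q(\gamma)=\gamma(q)$, so $f|_V=\gamma|_V$. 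Hence $f\in\C_M=\C$.

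It remains to check $h=\hat f$ as functions on $\Spec\C$. On $\ev(M)$ this is immediate, since $\hat f(\ev_p)=\ev_p(f)=f(p)=h(\ev_p)$. To extend the equality to all of $\Spec\C$ I would invoke Corollary \ref{cor:dense}: $\ev(M)$ is dense in $(\Spec\C,\tau_{\hat\C})$. Both $\hat f$ and $h$ are $\tau_{\hat\C}$-continuous ($\hat f\in\hat\C$, while $h$ is locally equal to such functions), and $\mR$ is Hausdorff, so $\{\chi:h(\chi)=\hat f(\chi)\}$ is closed and contains a dense set, whence it is all of $\Spec\C$. Therefore $h=\hat f\in\hat\C$, giving $\hat\C=(\hat\C)_{\Spec\C}$. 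I expect the main obstacle to be precisely this last passage: agreement on the evaluations $\ev(M)$ is automatic, but promoting it to every homomorphism genuinely requires the density of Corollary \ref{cor:dense} together with continuity of both functions, since without density the pulled-back $f$ need not satisfy $\hat f=h$ off the evaluations.
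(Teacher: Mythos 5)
Your proposal is correct and follows essentially the same route as the paper: superposition closure via Lemma \ref{lem:comp}, and localization closure by pulling the locally-$\hat{\C}$ function back along $\ev$, verifying membership in $\C$ by localization there, and then extending the identity from $\ev(M)$ to all of $\Spec\C$ via the density of Corollary \ref{cor:dense}. Your explicit remark that the density step also needs continuity of both functions (so that the agreement set is closed) is a detail the paper leaves implicit, but the argument is the same.
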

\begin{proof}
To prove that $(\Spec\C,\hat{\C})$ is a differential space, we have
to show that the set $\hat{\C}$ is closed with respect to
superposition with smooth functions from $\V_n$ and is closed with
respect to localization.

Let $g=\omega\circ (\hat{f_1},\dots,\hat{f_n})$ for some $\omega\in
\V_n$ and $\hat{f_1},\dots,\hat{f_n}\in \hat{\C}$. From Lemma
\ref{lem:comp} we know that\\
$g(\chi)=\omega\circ(\hat{f_1},\dots,\hat{f_n})(\chi)=\tau(\omega\circ
(f_1,\dots,f_n))(\chi)\quad \forall \chi\in \Spec\C$. We have shown
that $g\in \hat{\C}$ so $\hat\C$ is closed with respect to
superposition.

Let a function $f:\Spec\C\rightarrow \mR$ satisfy the localization
condition in the space $(\Spec\C,\hat{\C})$. For any open subset
$\hat{U}\in \Spec\C$ there is $\hat{g}\in \hat{\C}$ such that
$f|_{\hat{U}}=\hat{g}|_{\hat{U}}$. We can uniquely define a function
$h:M\rightarrow \mR$ by the condition $h(p)=f(ev_p)$ for all $p\in
M$. For any open set $\hat{U}\in \Spec\C$ the set  $U=\{p\in M :
ev_p\in \hat{U}\}$ is open. From the definitions of  $h$ and $U$ we
know that $h|_U=g|_U$. Because $g\in \C$ it follows that $h\in \C$.
We also know that $\hat{h}|_{evM}=f|_{evM}$. From Corollary
\ref{cor:dense} we derive that $f=\hat{h}$. This means that $f\in
\hat{\C}$ so $\hat{\C}$ is closed with respect to localization.
\end{proof}
Now one can prove the following lemmas:
\begin{lem}
If  $(M,\C)$ is a differential space with the structure $\C$
generated by $\C_0$ then the differential structure $\hat{\C}$ of
the differential space $(\Spec\C,\hat\C)$ is generated by
$\hat{\C_0}$.
\end{lem}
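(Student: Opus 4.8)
The goal is to establish the equality $\hat\C=\Gen\hat{\C_0}=(sc\hat{\C_0})_{\Spec\C}$, and I would prove it by treating the two inclusions separately. The inclusion $\Gen\hat{\C_0}\subseteq\hat\C$ is immediate: by the preceding lemma $(\Spec\C,\hat\C)$ is a differential space, so $\hat\C$ is a differential structure, and it visibly contains $\hat{\C_0}$; since $\Gen\hat{\C_0}$ is by definition the smallest differential structure containing $\hat{\C_0}$, it must sit inside $\hat\C$. All the work lies in the reverse inclusion.

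The first step toward $\hat\C\subseteq\Gen\hat{\C_0}$ is the algebraic identity $\widehat{sc\C_0}=sc\hat{\C_0}$, which is exactly what Lemma \ref{lem:comp} buys us. For any $\beta_1,\dots,\beta_n\in\C_0$ and $\omega\in\V_n$ and every $\chi\in\Spec\C$,
\[
\widehat{\omega\circ(\beta_1,\dots,\beta_n)}(\chi)=\chi\bigl(\omega\circ(\beta_1,\dots,\beta_n)\bigr)=\omega\bigl(\chi(\beta_1),\dots,\chi(\beta_n)\bigr)=\bigl(\omega\circ(\hat\beta_1,\dots,\hat\beta_n)\bigr)(\chi).
\]
Hence $\tau$ carries each superposition of generators to the corresponding superposition of the $\hat\beta_i$, and conversely every element of $sc\hat{\C_0}$ arises this way, so $\tau$ restricts to a bijection of $sc\C_0$ onto $sc\hat{\C_0}$. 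It now remains to verify the localization condition: that each $\hat f\in\hat\C$ is, in the topology $\tau_{\hat\C}$, locally a member of $sc\hat{\C_0}$.

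For the evaluation points this is clean and I would argue it directly. Because $\C=\Gen\C_0$, around each $p\in M$ there is a $\tau_\C$-open set $U$ and $g=\omega\circ(\beta_1,\dots,\beta_n)\in sc\C_0$ with $f|_U=g|_U$; and as in the proof of Lemma \ref{lem:5} one has a bump function $\psi\in sc\C_0$ with $\psi(p)=1$ that vanishes off $U$, so that $\psi\cdot(f-g)=0$ on all of $M$. Applying the isomorphism $\tau$ together with the identity above yields $\hat\psi\cdot(\hat f-\hat g)=0$ on $\Spec\C$ with $\hat g,\hat\psi\in sc\hat{\C_0}$; therefore on the $\tau_{\hat\C}$-open set $\{\hat\psi\neq0\}\ni\ev_p$ we get $\hat f=\hat g\in sc\hat{\C_0}$, which is the desired local representation at $\ev_p$.

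The main obstacle is extending this to an arbitrary $\chi\in\Spec\C$ that need not be an evaluation, since a priori such a $\chi$ could avoid every neighborhood $\{\hat\psi\neq0\}$ produced above; these sets are open and cover the dense set $\ev(M)$, but density alone only guarantees their union is dense, not all of $\Spec\C$. Here is where I would invoke Corollary \ref{cor:dense} more carefully. Given a basic $\tau_\C$-open $U=\{q:a_i<h_i(q)<b_i,\ i=1,\dots,k\}$ with $h_i\in\C$ on which $f\in sc\C_0$, set $\hat U=\{\chi:a_i<\hat h_i(\chi)<b_i\}$; then $\ev^{-1}(\hat U)=U$, so $\hat f-\hat g$ vanishes on $\ev(M)\cap\hat U$, which by density is dense in the open set $\hat U$, and since $\hat f-\hat g$ is $\tau_{\hat\C}$-continuous it vanishes on all of $\hat U$. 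The crux, and the step I expect to require the most care, is to show that every $\chi$ lands in some such $\hat U$: I would fix a basic $\tau_{\hat\C}$-neighborhood of $\chi$, use density to place an evaluation $\ev_p$ inside it, and choose the localization datum of $f$ at $p$ compatibly so that the resulting $\hat U$ contains $\chi$. Once this cover of $\Spec\C$ is secured, each $\hat f$ satisfies the localization condition everywhere, giving $\hat f\in(sc\hat{\C_0})_{\Spec\C}=\Gen\hat{\C_0}$ and completing the proof.
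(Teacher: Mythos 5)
Your proposal follows the same route as the paper's own proof: establish $\widehat{sc\,\C_0}=sc\hat{\C_0}$ from Lemma \ref{lem:comp}, transport the local representations of $f$ over a covering of $M$ to sets $\hat U\subseteq\Spec\C$, and use the density of $\ev(M)$ (Corollary \ref{cor:dense}) to handle the rest of the spectrum. In the parts you complete you are in fact more careful than the paper: you isolate the trivial inclusion $\Gen\hat{\C_0}\subseteq\hat\C$, you define $\hat U$ as a genuinely $\tau_{\hat\C}$-open set with $\ev^{-1}(\hat U)=U$ (the paper's $\hat U=\{\ev_p: p\in U\}$ is only a subset of $\ev(M)$ and need not be open in $\Spec\C$), and you correctly deduce $\hat f=\hat g$ on $\hat U$ from $\tau_{\hat\C}$-continuity together with the density of $\ev(M)\cap\hat U$ in $\hat U$.

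However, the step you yourself flag as the crux is a genuine gap, and your sketch for closing it does not work as stated. Given a non-evaluation $\chi$ and a basic neighborhood $\hat W\ni\chi$, density produces some $\ev_p\in\hat W$; but the localization of $f$ at $p$ then hands you a \emph{specific} $U\ni p$, hence a specific open $\hat U\ni\ev_p$, and nothing forces $\chi\in\hat U$. The set $U$ is dictated by the definition of $\C=(sc\,\C_0)_M$ at the point $p$ and is not a datum you are free to ``choose compatibly''; it may be an arbitrarily small neighborhood of $p$, and shrinking $\hat W$ only moves $p$ around without ever trapping $\chi$ inside the corresponding $\hat U$. What your argument actually proves is that $\hat f$ locally belongs to $sc\hat{\C_0}$ on the open dense set $\bigcup\hat U$, not on all of $\Spec\C$. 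To be fair, the paper's proof has exactly the same lacuna: it concedes that the sets $\hat U$ ``might not be a covering of $\Spec\C$'' and then asserts without argument that the representation ``can be prolonged uniquely'' to the whole spectrum --- density gives uniqueness of a continuous prolongation, but the existence of a local representative of $\hat f$ from $sc\hat{\C_0}$ near a non-evaluation point is precisely what has to be established. So your write-up reproduces the paper's argument, tightens everything that can be tightened, and exposes rather than fills the one step the paper leaves unjustified.
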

\begin{proof}
Assume that $\C_0=\{f_i:i\in I\}$. We know that for any $f\in \C$
there exists an open covering of $M$ such that on each set $U$ of
this covering the function $f$ can be expressed in the form
$\omega\circ (f_1,\dots,f_n)$ where $f_1,\dots, f_n\in \C$ and
$\omega \in \V_n$. For each open set $U$ of the covering we define
$\hat{U}=\{ev_p\in \Spec\C: p\in U\}$. On the set $\hat{U}$ we
consider the function $\hat{f}=\tau(\omega\circ (f_1,\dots,f_n))$.
The sets of the form $\hat{U}$ might not be a covering of $\Spec\C$
but thair union is dense in $\Spec\C$. Therefore we can prolong
uniquely this representation of $\hat{f}$ on the whole $\Spec\C$. We
have shown that $\hat{\C}=\Gen\hat{\C_0}$.
\end{proof}
\begin{lem}
For any differential space $(M,\C)$ the differential space
$(\Spec\C,\hat{\C})$ is smoothly real-compact.
\end{lem}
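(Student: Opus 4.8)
The plan is to exploit the algebra isomorphism $\tau\colon\C\to\hat\C$, $\tau(f)=\hat f$, established above, which lets me pull any homomorphism on $\hat\C$ back to a homomorphism on $\C$, i.e.\ to a point of the space $(\Spec\C,\hat\C)$. Concretely, given an arbitrary $\psi\in\Spec\hat\C$ I must produce a point $\chi\in\Spec\C$ with $\psi=ev_\chi$, where evaluation in this space means $ev_\chi(\hat f)=\hat f(\chi)=\chi(f)$.

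First I would define $\chi:=\psi\circ\tau\colon\C\to\mR$. Since $\tau$ is a unital algebra isomorphism and $\psi$ is a unity-preserving homomorphism, $\chi$ is again a unity-preserving homomorphism, so $\chi\in\Spec\C$; that is, $\chi$ is a legitimate point of the differential space $(\Spec\C,\hat\C)$. This step is where the earlier structural work pays off: the isomorphism $\tau$ is precisely what guarantees that pulling back along it lands inside $\Spec\C$.

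Next I would check that $\psi$ coincides with evaluation at $\chi$. For any $\hat f\in\hat\C$ I compute $ev_\chi(\hat f)=\hat f(\chi)=\chi(f)=\psi(\tau(f))=\psi(\hat f)$, so $\psi=ev_\chi$ on all of $\hat\C$; hence every $\psi\in\Spec\hat\C$ is an evaluation and $(\Spec\C,\hat\C)$ is smoothly real-compact. Alternatively, writing $\C=\Gen\C_0$, the preceding lemma gives $\hat\C=\Gen\hat{\C_0}$, and one verifies $\psi|_{\hat{\C_0}}=ev_\chi|_{\hat{\C_0}}$ by the same one-line computation restricted to generators; Lemma \ref{lem:5}, applied to the space $(\Spec\C,\hat\C)$, then upgrades agreement on the generators to agreement on all of $\hat\C$.

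The argument has no serious obstacle once $\tau$ is recognized as an isomorphism; the only point demanding care is the bookkeeping of what \emph{evaluation} means in the new space. The underlying set of $(\Spec\C,\hat\C)$ consists of the homomorphisms $\chi$ themselves, so $ev_\chi$ acts on $\hat\C$ by $\hat f\mapsto\hat f(\chi)=\chi(f)$; confirming that the transported homomorphism $\chi=\psi\circ\tau$ is exactly the point at which $\psi$ evaluates is the whole content of the lemma, and it reduces to unwinding the definitions of $\tau$, $\hat f$, and $ev$.
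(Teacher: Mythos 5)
Your proposal is correct and is essentially identical to the paper's own proof: both define $\chi(f)=\psi(\hat f)$ (i.e.\ $\chi=\psi\circ\tau$) using the isomorphism $\tau$, and both verify $ev_\chi(\hat f)=\hat f(\chi)=\chi(f)=\psi(\hat f)$. The alternative route via generators that you sketch is unnecessary but harmless.
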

\begin{proof}
We need to show that for every homomorphism $\hat\chi\in
\Spec\hat{\C}$ there  exists a homomorphism $\psi\in \Spec\C$ such
that $\hat\chi=ev_\psi$. Since the algebras $\C$ and $\hat\C$ are
isomorphic we can define uniquely $\chi\in \Spec\C$ by the formula
$\chi(f)=\hat\chi(\hat{f})$. We will show that $\hat\chi=ev_\chi$.
Indeed $ev_\chi(\hat{f})=\hat{f}(\chi)=\chi(f)=\hat\chi(\hat{f})$.
\end{proof}

\begin{lem}\label{lem:uniq}
Let $(M,\C)$ be a differential space and $\C=\Gen\C_0$. If
$\chi_1,\chi_2\in \Spec\C$ are equal on the generators,
$\chi_1|_{\C_0}=\chi_2|_{\C_0}$, then they are equal,
$\chi_1=\chi_2$.
\end{lem}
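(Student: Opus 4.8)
The plan is to lift the problem one level up, to the differential space $(\Spec\C,\hat\C)$, where the statement becomes a direct instance of Lemma~\ref{lem:5}. The guiding observation is that Lemma~\ref{lem:5} already settles the case in which one of the two homomorphisms is an evaluation at an honest point, and in the space $(\Spec\C,\hat\C)$ every element of $\Spec\C$ literally \emph{is} a point of the underlying set. So the trick is to convert ``two arbitrary homomorphisms on $\C$ agreeing on $\C_0$'' into ``a homomorphism on $\hat\C$ agreeing, on the generators $\hat{\C_0}$, with an evaluation''.

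First I would recall the two facts established above: that $(\Spec\C,\hat\C)$ is itself a differential space, and that its structure is generated by $\hat{\C_0}=\{\hat f:f\in\C_0\}$. I then regard the given $\chi_1,\chi_2$ not as maps but as points of this space, and form the corresponding evaluations $ev_{\chi_1},ev_{\chi_2}\in\Spec\hat\C$, where $ev_{\chi_j}(\hat f)=\hat f(\chi_j)=\chi_j(f)$ for $f\in\C$. The key computation is on the generators: for each $f\in\C_0$ one has $ev_{\chi_1}(\hat f)=\chi_1(f)$ and $ev_{\chi_2}(\hat f)=\chi_2(f)$, and these coincide precisely because $\chi_1|_{\C_0}=\chi_2|_{\C_0}$. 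Hence $ev_{\chi_1}$ and $ev_{\chi_2}$ agree on $\hat{\C_0}$.

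Now I apply Lemma~\ref{lem:5} inside $(\Spec\C,\hat\C)$ — taking the generating set $\hat{\C_0}$, the homomorphism $ev_{\chi_2}\in\Spec\hat\C$, and the point $\chi_1\in\Spec\C$. Since $ev_{\chi_2}$ agrees with the evaluation $ev_{\chi_1}$ on $\hat{\C_0}$, the lemma yields $ev_{\chi_2}=ev_{\chi_1}$ on all of $\hat\C$. Unwinding this equality gives $\chi_2(f)=\hat f(\chi_2)=\hat f(\chi_1)=\chi_1(f)$ for every $f\in\C$, which is exactly $\chi_1=\chi_2$.

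The only real work is bookkeeping, and I expect that to be the main (though modest) obstacle: one must verify that the hypotheses of Lemma~\ref{lem:5} genuinely transfer to the lifted space, namely that $\hat\C=\Gen\hat{\C_0}$ and that $ev_{\chi_1},ev_{\chi_2}$ are bona fide unital homomorphisms in $\Spec\hat\C$ with $ev_{\chi_1}$ an evaluation at a point of the underlying set $\Spec\C$. Both are supplied by the lemmas already proved. Conceptually, the content is just the remark that ``uniqueness on generators'' for $\C$ is Lemma~\ref{lem:5} read one level higher, where the second homomorphism has automatically become an evaluation and thereby furnishes the concrete point that a direct localization-and-bump-function argument on $(M,\C)$ would otherwise be missing.
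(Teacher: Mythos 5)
Your proof is correct, and it shares the paper's first move --- passing to the differential space $(\Spec\C,\hat\C)$ and using the fact that $\hat\C=\Gen\hat{\C_0}$ --- but it finishes differently. The paper, after observing that $\hat f(\chi_1)=\hat f(\chi_2)$ for all $\hat f\in\hat{\C_0}$, appeals to the general principle that if the generators of a differential structure do not separate two points then no function in the generated structure separates them (a true fact, resting on the observation that such points are topologically indistinguishable in $\tau_{\C}$, but one the paper asserts without proof). You instead reinterpret the agreement on $\hat{\C_0}$ as the statement that the homomorphism $ev_{\chi_2}\in\Spec\hat\C$ restricts on the generators to the evaluation $ev_{\chi_1}$ at the point $\chi_1\in\Spec\C$, and then invoke Lemma~\ref{lem:5} verbatim in the lifted space to get $ev_{\chi_2}=ev_{\chi_1}$, hence $\chi_1=\chi_2$. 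The hypotheses of Lemma~\ref{lem:5} do transfer ($(\Spec\C,\hat\C)$ is a differential space and $\hat\C=\Gen\hat{\C_0}$ are both established beforehand, and neither result, nor Lemma~\ref{lem:5} itself, depends on the present lemma, so there is no circularity). What your route buys is that the localization work --- the bump-function argument needed to pass from $sc$ of the generators to the full structure --- is done once, inside the already-proved Lemma~\ref{lem:5}, rather than being re-asserted as a separate separation principle; the cost is that the argument is slightly less self-contained, since it leans on the full strength of Lemma~\ref{lem:5} where the paper only needs a separation statement about functions, not homomorphisms.
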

\begin{proof}
Assume that $\chi_1|_{\C_0}=\chi_2|_{\C_0}$ and $\chi_1\neq\chi_2$.
From the last lemma we know that the differential structure
$\hat{\C}$ of the differential space $(\Spec\C,\hat{\C})$ is
generated by $\hat{\C}_0$. From the condition
$\chi_1|_{\C_0}=\chi_2|_{\C_0}$ we derive that
$\hat{f}(\chi_1)=\hat{f}(\chi_2)$, for all $\hat{f}\in \hat{\C}_0$.
But we know that if the generators do not separate points then all
the functions do not separate points so $\forall \hat{f}\in \hat{\C}
\hat{f}(\chi_1)=\hat{f}(\chi_2)$ and it follows that
$\chi_1(f)=\chi_2(f)$for all $f\in \C$ . This means that
$\chi_1=\chi_2$.
\end{proof}
\begin{lem}
If $(M,\C)$ is differential subspace of  $(\mR^I,\V_I)$ then any
function $f\in \C$ is uniquely continuously prolongable to
$\tilde{f}:\tilde{M}\rightarrow \mR$, where $\tilde{M}=\{p\in \mR^I
: \exists \chi\in \Spec\C$ such that
 $p_i=\chi(\pi_i|_M) \quad\forall i \in I\}$.\label{lem:prelong}
\end{lem}
\begin{proof}
We define $\tilde{f}(p)=\hat{f}(\chi)$ where $\chi\in \Spec\C$ is
such that $\chi(\pi_i)=p_i$ for all $i\in I$. Since a homomorphism
is uniquely defined by its value on the generators (Lemma
\ref{lem:uniq}) this definition is correct. We see that if $p\in M$
then $\chi=ev_p$ and $\tilde{f}(p)=\hat{f}(ev_p)=f(p)$ so this is
indeed a prolongation. This prolongation is continuous since the
function $\tilde{f}$ is a realization of the function $\hat{f}$ on
the set $\tilde{M}$ which is the image of  $\Spec\C$ under the
generator embedding using the generators $\tau(\pi_i|_M), i\in I$.
Uniqueness follows from the fact that $M$ is dense in $\tilde{M}$ in
the topology of $\mR^I$.
\end{proof}
From Lemma \ref{lem:prelong} we obtain:
\begin{cor}\label{cor:prelong}
When $(M,\C)$ is a differential subspace of $(\mR^I,\V_I)$ generated
by  $\C_0=\{\pi_i|_M: i\in I\}$ then the mapping
$\chi:\C_0\rightarrow\mR$ defined on generators
by$\chi(\pi_i|_M)=p_i$ for some $p\in\tilde{M}-M$ can be prolonged
to a homomorphism on the whole $\C$ iff all the functions from $\C$
are prolongable to $p$.
\end{cor}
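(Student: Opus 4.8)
The plan is to establish the two implications separately, reading \emph{prolongable to $p$} in the sense of Lemma \ref{lem:prelong}: namely that the unique continuous extension $\tilde f$ of $f$ to $\tilde M$ is defined at the point $p$.

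For the forward implication I would argue as follows. If the generator map prolongs to some $\chi\in\Spec\C$, then $\chi(\pi_i|_M)=p_i$ for every $i\in I$, so $\chi$ is precisely a homomorphism witnessing $p\in\tilde M$ through the defining condition of $\tilde M$. Lemma \ref{lem:prelong} then supplies, for every $f\in\C$, a continuous prolongation $\tilde f$ on the whole of $\tilde M$; in particular $\tilde f(p)$ is defined, so all functions from $\C$ are prolongable to $p$.

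For the converse I would build the homomorphism by hand. Assuming every $f\in\C$ prolongs to $p$, set $\chi(f):=\tilde f(p)$. This is well defined because $M$ is dense in $\tilde M$ in the topology of $\mR^I$, so the continuous prolongation — and hence its value at $p$ — is unique. To check that $\chi$ respects the algebra I would transport the pointwise identities on $M$ to $p$ by continuity: the extensions $\widetilde{f+g}$ and $\tilde f+\tilde g$ agree on the dense set $M$ (both restrict to $(f+g)|_M$ there) and are continuous on $\tilde M$, hence coincide at $p$, giving $\chi(f+g)=\chi(f)+\chi(g)$; applying the same argument to $(fg)|_M=(f|_M)(g|_M)$ gives $\chi(fg)=\chi(f)\chi(g)$, while $\tilde 1\equiv 1$ gives $\chi(1)=1$. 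Agreement on generators is immediate, since $\pi_i|_M$ prolongs to the global projection $\pi_i$ and therefore $\chi(\pi_i|_M)=\pi_i(p)=p_i$.

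I expect the homomorphism verification in the converse to be the only real obstacle: the multiplicativity $\chi(fg)=\chi(f)\chi(g)$ must be read off from the extensions, and this relies squarely on the density of $M$ in $\tilde M$ together with uniqueness of continuous prolongation. Once this is secured, Lemma \ref{lem:uniq} identifies the constructed $\chi$ with the unique element of $\Spec\C$ carrying the prescribed values on $\C_0$, so both sides of the equivalence are realized by the single homomorphism $f\mapsto\tilde f(p)$.
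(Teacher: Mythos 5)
Your argument is correct, but it is worth noting that the paper offers no proof of this corollary at all --- it is simply announced with ``From Lemma \ref{lem:prelong} we obtain,'' so the only intended content is your forward implication: a homomorphism $\chi$ extending $\pi_i|_M\mapsto p_i$ witnesses $p\in\tilde M$, and Lemma \ref{lem:prelong} then prolongs every $f\in\C$ continuously to $p$. (That is also the only direction the paper ever uses: in the $\mR^{\mN}-\{0\}$ example it is applied in contrapositive form, with the non-prolongable function $\xi$ ruling out the would-be homomorphism at $0$.) Your converse, defining $\chi(f):=\tilde f(p)$ and checking additivity, multiplicativity and unitality by transporting the identities from the dense subset $M$ to $p$ by continuity, is a genuine addition; it is sound because $p\in\tilde M$ forces $p\in\overline M$ in $\mR^I$ (via Corollary \ref{cor:dense}), which gives both uniqueness of the extension value and the density needed for the algebraic identities. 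One caveat you might flag: as literally stated, with $p$ ranging over $\tilde M-M$, both sides of the equivalence hold automatically (the defining condition of $\tilde M$ already produces the homomorphism, and Lemma \ref{lem:prelong} already produces the prolongations), so the corollary only acquires content when read, as you implicitly do, for a candidate point $p$ in the closure of $M$ whose membership in $\tilde M$ is not presupposed. Your proof is the right one for that reading.
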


Let $M=\mR^\mN-\{0\}$ and $\C_M=(\V_\mN)_M$. Then $(M,\C_M)$ is a
differential subspace of $(\mR^\mN,\V_\mN)$. We will show that this
space is smoothly real-compact.
\begin{lem}\label{lem:prelong1}
There exists a function $\xi\in \C_M$ which is not prolongable to
any continuous function on $\mR^\mN$.
\end{lem}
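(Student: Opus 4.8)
The plan is to construct $\xi$ directly as a locally finite sum of smooth bumps that accumulate only at the removed origin. Fix smooth functions $c,d:\mR\to[0,1]$ with $c(0)=1$, $\supp c\subseteq(-1,1)$, and $d(1)=1$, $\supp d\subseteq(1/2,3/2)$, and set $\beta_n(x)=d(nx_n)\prod_{k<n}c(nx_k)$ for $n\in\mN$. Each $\beta_n$ is the superposition of a smooth function on $\mR^n$ with $\pi_1,\dots,\pi_n$, hence $\beta_n\in\V_\mN$, and $\supp\beta_n$ is contained in the box $\{\,x:\ x_n\in(1/(2n),3/(2n)),\ |x_k|<1/n \text{ for } k<n\,\}$. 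I would then define $\xi:=\sum_{n\ge1}n\,\beta_n$ on $M$ and show (i) $\xi\in\C_M$ and (ii) $\xi$ admits no continuous extension to $0$.

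For (i) the decisive observation is that for each fixed coordinate index $j$ the extent of $\supp\beta_n$ in the $j$-th direction tends to $0$: for $n>j$ the factor $c(nx_j)$ forces $|x_j|<1/n$, so $\sup\{|x_j|:x\in\supp\beta_n\}\to0$ as $n\to\infty$. Consequently, given $p\in M$ there is some $j$ with $p_j\neq0$, and the neighborhood $U=\{x:|x_j-p_j|<|p_j|/2\}\in\tau_{\V_\mN}$ meets $\supp\beta_n$ for only finitely many $n$; on $U\cap M$ the function $\xi$ therefore coincides with the finite sum $\sum_{n<N}n\beta_n\in\V_\mN$. This is exactly the localization condition, so $\xi\in(\V_\mN)_M=\C_M$, and in particular $\xi$ is well defined (a finite sum at every point of $M$).

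For (ii) I would evaluate $\xi$ along the sequence $v_n\in M$ whose $n$-th coordinate equals $1/n$ and whose remaining coordinates vanish. A direct check gives $\beta_m(v_n)=0$ for $m\neq n$ (for $m<n$ because $(v_n)_m=0$ and $d(0)=0$, for $m>n$ because the factor $c(m/n)$ vanishes since $m/n>1$) and $\beta_n(v_n)=d(1)\prod_{k<n}c(0)=1$, whence $\xi(v_n)=n$. Since $v_n\to0$ in the Tikhonov topology $\tau_{\V_\mN}$ while $\xi(v_n)=n\to\infty$, no continuous $\tilde\xi$ on $\mR^\mN$ can satisfy $\tilde\xi|_M=\xi$, which is the assertion.

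The main obstacle, and the reason a naive candidate such as a reciprocal of a global positive function fails, is precisely securing local finiteness on $M$ in the product topology: a basic neighborhood of a point controls only finitely many coordinates, so a bump concentrated in high coordinates is invisible to it. The construction circumvents this by letting the gates $c(nx_k)$ on the early coordinates shrink with $n$, so that any $p\in M$, having a single fixed nonzero coordinate $p_j$, automatically escapes the support of all $\beta_n$ with $n$ large. Verifying the estimate $\sup_{\supp\beta_n}|x_j|\to0$ and the disjointness computation $\beta_m(v_n)=0$ for $m\neq n$ are the only steps needing care; both are elementary once the bumps are set up as above.
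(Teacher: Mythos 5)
Your proof is correct and follows essentially the same strategy as the paper's: a locally finite sum of cylinder bumps whose supports shrink toward the origin in every fixed coordinate, with membership in $\C_M$ verified via the localization condition and non-prolongability obtained by exhibiting a sequence in $M$ converging to $0$ along which the sum diverges. The only difference is cosmetic: you take products of one-variable bumps with growing coefficients $n\beta_n$, while the paper sums unit bumps $\phi(k^2\rho_k)$ of the partial sums of squares $\rho_k((x_n))=\sum_{i\le k}x_i^2$, so that divergence comes from the accumulation of infinitely many unit terms near $0$ rather than from growing coefficients.
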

\begin{proof}
We know that there exists a function $\phi\in C^\infty(\mR)$
satisfying the
following properties:\\
1. $\forall x\in \mR\quad \phi(x)\in [0,1]$\\
2. $supp(\phi)\in (-\infty,1]$\\
3. $\phi|_{[0,\frac{1}{2}]}=1$\\
For any $k\in \mN$ we  define $\tilde\rho_k:\mR^\mN\rightarrow \mR$
by the formula:
$$
\tilde{\rho}_k((x_n))=\sum_{i=1}^k x_i^2
$$
for $(x_n)\in \mR^\mN$. Then $\tilde{\rho}_k\in
\C^{\infty}(\mR^{\mN})$, and  $\rho_k=\tilde{\rho}_k|_M \C_M$. We
define $\xi:M\rightarrow \mR$ by:
\begin{equation}\label{def:f}
\xi((x_n))=\sum_{k=1}^\infty \phi(k^2\rho_k((x_n))).
\end{equation}
We will show that this function belongs to the structure $\C_M$. For
any $k\in \mN$ we can define the closed subset $A_k=\{(x_n)\in M:
k^2\rho_k((x_n))\leq 1\}=\{(x_n)\in M:\rho_k((x_n))\leq
\frac{1}{k^2}\}$. We see that $\supp(\phi\circ(k^2\rho_k))\subseteq
A_k$. For any $(x_n)\in M$ the sequence $\rho_k((x_n))$ is
non-decreasing with respect to $k$ and there exists $k_0\in \mN$ for
which $\frac{1}{k^2}<\rho_{k_0}((x_n))$. This means that
$(x_n)\notin A_k$. Therefore $\bigcap_{k\in \mN} A_k=\emptyset$. We
also know that $A_{k+1}\subseteq A_k$. Let us define the family of
open subsets $U_k=M-A_k$. Of course $\bigcup_{k\in \mN}U_k=M$. If
$(x_n)\in U_k$ then $\phi(k^2\rho_k((x_n))=0$. Then for all $m>k$,
$x_n\in U_m$ so $\phi(m^2\rho_m((x_n)))=0$. This means that only a
finite number of elements are non-zero in the sum (\ref{def:f}) and
therefore
\begin{displaymath}
\xi((x_n))=\sum_{j=1}^{k-1}\phi(j^2\rho_j((x_n))),
\end{displaymath}
so $\xi|_{U_k}\in \C_{U_k}=(\C_M)_{U_k}$ for all $k\in \mN$. From
the localization closedness of the differential structure we derive
that $\xi\in \C_M$. Now we will define a sequence in $M$ convergent
to $0$ on which the function $\xi$  diverges. Let $z_k=(x_{n,k})$
where
\begin{displaymath}
x_{n,k}= \left\{
\begin{array}{lll}
\frac{1}{k\sqrt{2}} & \textrm{for} & n=k, \\
0 & \textrm{for} & n\neq k.
\end{array}\right.
\end{displaymath}
We can see that $ \lim_{k\rightarrow \infty}z_k=0\in \mR^{\mN} $ and
\begin{displaymath}
\rho_j((z_k))= \left\{
\begin{array}{lll}
\frac{1}{2k^2} & \textrm{for} & j\geq k, \\
0 & \textrm{for} & j<k.
\end{array}\right.
\end{displaymath}
For $j\leq k$ we obtain $\phi(j^2\rho_j((x_k)))=1$ and therefore
\begin{displaymath}
\xi((x_k))=\sum_{j=1}^{\infty}\phi(j^2\rho_j((x_k)))\geq\sum_{j=1}^k
1=k.
\end{displaymath}
This means that $\lim_{k\rightarrow{\infty}}\xi((x_k))=+\infty$. The
function $\xi$ is not prolongable to any continuous function in
$\mR^\mN$.
\end{proof}
Now we prove
\begin{lem}
The differential space $(M,\C_M)$ is smoothly real-compact.
\end{lem}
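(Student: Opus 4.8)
The plan is to combine the description of the ``ideal boundary'' $\tilde{M}$ from Lemma~\ref{lem:prelong} with the non-extendable function constructed in Lemma~\ref{lem:prelong1}. Since $M=\mR^\mN-\{0\}$ is a differential subspace of $(\mR^\mN,\V_\mN)$ generated by $\C_0=\{\pi_i|_M:i\in\mN\}$, every $\chi\in\Spec\C_M$ determines a point $p\in\mR^\mN$ by $p_i:=\chi(\pi_i|_M)$, and by the very definition of $\tilde{M}$ this point lies in $\tilde{M}$. The whole problem therefore reduces to showing that $p$ must already lie in $M$, that is, that $\tilde{M}=M$.

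First I would note that the only point of $\mR^\mN$ not belonging to $M$ is the origin, so $\tilde{M}-M\subseteq\{0\}$; it thus suffices to rule out $0\in\tilde{M}$. Suppose, toward a contradiction, that $0\in\tilde{M}$, i.e.\ that some $\chi\in\Spec\C_M$ satisfies $\chi(\pi_i|_M)=0$ for all $i\in\mN$. Such a $\chi$ is a prolongation to all of $\C_M$ of the map on generators sending $\pi_i|_M\mapsto 0$, so by Corollary~\ref{cor:prelong} every function of $\C_M$ must be continuously prolongable to $0$. But Lemma~\ref{lem:prelong1} produces a function $\xi\in\C_M$ together with a sequence $z_k\to 0$ in $\mR^\mN$ along which $\xi(z_k)\to+\infty$; hence $\xi$ admits no continuous extension to $0$. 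This contradiction shows $0\notin\tilde{M}$, and since $M\subseteq\tilde{M}$ always holds (witnessed by $ev_p$ for $p\in M$), we conclude $\tilde{M}=M$.

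To finish, I would take an arbitrary $\chi\in\Spec\C_M$ and again set $p_i:=\chi(\pi_i|_M)$. By the previous step $p\in\tilde{M}=M$, and by construction $\chi$ agrees with $ev_p$ on the generators $\C_0$. Lemma~\ref{lem:5} (equivalently the uniqueness statement of Lemma~\ref{lem:uniq}) then gives $\chi=ev_p$. As $\chi$ was arbitrary, every element of $\Spec\C_M$ is an evaluation at a point of $M$, which is precisely the assertion that $(M,\C_M)$ is smoothly real-compact.

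I expect essentially no obstacle in this argument, because the genuinely hard work---constructing a function on $M$ that blows up as one approaches the origin---has already been carried out in Lemma~\ref{lem:prelong1}. The only point demanding a little care is the elementary bookkeeping observation that $\tilde{M}-M$ collapses to the single point $0$; once that is recorded, the conclusion is a direct application of Corollary~\ref{cor:prelong} followed by Lemma~\ref{lem:5}.
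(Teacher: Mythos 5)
Your argument is correct and follows the paper's own route: reduce to the values of $\chi$ on the generators $\pi_i|_M$, observe that the only candidate point outside $M$ is $0$, and exclude it by combining Corollary~\ref{cor:prelong} with the non-prolongable function $\xi$ of Lemma~\ref{lem:prelong1}, finishing with Lemma~\ref{lem:5}/Lemma~\ref{lem:uniq}. Your write-up is in fact somewhat more explicit than the paper's about why $\tilde{M}-M\subseteq\{0\}$ and how the final identification $\chi=ev_p$ is obtained, but the substance is the same.
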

\begin{proof}
From Lemma \ref{lem:uniq} we know that the set $\Spec\C_M$ may
contain only one homomorphism $\chi_0$ which is not an evaluation.
This homomorphism would be defined on the generators by the formula
$\chi_0(\pi_i|_M)=0$ for all $i\in I$. So there would be only one
point $0\in \tilde{M}-M$. But it cannot be so since from Corollary
\ref{cor:prelong} we know that all the functions from $\C_M$ are
prolongable to the point $0$. From the last lemma we know that there
exists a function $\xi\in \C_M$ which is not prolongable.
\end{proof}
One can easily see
\begin{cor}
For any $p\in \mR^\mN$ the differential space
$(\mR^\mN-\{p\},(\V_\mN)_{\mR^\mN-\{p\}})$ is smoothly real-compact.
\end{cor}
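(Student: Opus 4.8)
The plan is to reduce the statement to the case $p=0$, which was just established in the preceding lemma, by exploiting the translational homogeneity of $\mR^\mN$. First I would introduce the translation $T_p\colon\mR^\mN\to\mR^\mN$ given by $T_p(x)=x-p$. Since a map into $(\mR^\mN,\V_\mN)$ is smooth exactly when each of its coordinates is smooth, and here $\pi_i\circ T_p=\pi_i-p_i\in\V_\mN$ for every $i$, the map $T_p$ is a morphism of $(\mR^\mN,\V_\mN)$ into itself. Its inverse is $T_{-p}$, smooth by the same computation, so $T_p$ is a diffeomorphism of $(\mR^\mN,\V_\mN)$.

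Next, because $T_p(p)=0$, the map $T_p$ carries $\mR^\mN-\{p\}$ bijectively onto $\mR^\mN-\{0\}$. A diffeomorphism of ambient differential spaces restricts to a diffeomorphism of differential subspaces carrying the localized structures, so $T_p$ induces a diffeomorphism $(\mR^\mN-\{p\},(\V_\mN)_{\mR^\mN-\{p\}})\to(\mR^\mN-\{0\},(\V_\mN)_{\mR^\mN-\{0\}})$.

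Finally I would invoke that smooth real-compactness is a diffeomorphism invariant, exactly as already used in the opening line of the proof of Lemma \ref{lem:embed}: the pullback $T_p^*$ is an algebra isomorphism between the two localized structures, so $\chi\mapsto\chi\circ T_p^*$ is a bijection $\Spec(\V_\mN)_{\mR^\mN-\{p\}}\to\Spec(\V_\mN)_{\mR^\mN-\{0\}}$ that sends $\ev_q$ to $\ev_{T_p(q)}$ and hence matches evaluations with evaluations. Since the preceding lemma shows that every element of $\Spec(\V_\mN)_{\mR^\mN-\{0\}}$ is an evaluation, and this bijection matches evaluations with evaluations, every element of $\Spec(\V_\mN)_{\mR^\mN-\{p\}}$ must likewise be an evaluation, which is the claim.

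The computation is entirely routine; the only point requiring a little care — and the step I would treat as the main (mild) obstacle — is verifying that the restriction of the ambient diffeomorphism to the punctured spaces really is an isomorphism of the localized differential structures, i.e. that $(\V_\mN)_{\mR^\mN-\{p\}}=T_p^*\bigl((\V_\mN)_{\mR^\mN-\{0\}}\bigr)$. This follows from the compatibility of the localization operation with composition by a diffeomorphism: a locally-$\V_\mN$ function on $\mR^\mN-\{0\}$ pulls back to a locally-$\V_\mN$ function on $\mR^\mN-\{p\}$, and conversely, because $T_p$ and $T_{-p}$ are homeomorphisms preserving $\tau_{\V_\mN}$ and take $\V_\mN$-functions to $\V_\mN$-functions.
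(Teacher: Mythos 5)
Your proposal is correct and follows exactly the paper's route: the paper's own proof is the one-line observation that the punctured space is diffeomorphic to $(M,\C_M)=(\mR^\mN-\{0\},(\V_\mN)_{\mR^\mN-\{0\}})$, with the translation diffeomorphism and the invariance of smooth real-compactness left implicit. You have simply supplied those details explicitly, and they are all accurate.
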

\begin{proof}
This space is diffeomorphic to  $(M,\C_M)$ so it is be smoothly
real-compact.
\end{proof}
\begin{defn}
The disjoint union of differential spaces $(M,\C)$ and $(N,\D)$
where $M\cap N=\emptyset$ is the differential space $(M\cup
N,\C\oplus\D)$. The structure $\C\oplus \D$ is defined by the
property $f\in\C\oplus\D \iff f|_M\in \C$ and $f|_N\in \D$.
\end{defn}
We will prove:
\begin{lem}
If differential spaces $(M,\C)$ and $(N,\D)$ are smoothly
real-compact then the differential space $(M\cup N,\C\oplus \D)$ is
smoothly real-compact.
\end{lem}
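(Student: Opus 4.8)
The plan is to reduce any element of $\Spec(\C\oplus\D)$ to one that concentrates on a single summand, using the characteristic functions of $M$ and $N$ as idempotents. Since $M\cap N=\emptyset$ and the constants $0,1$ belong to both $\C$ and $\D$, the function $e_M$ equal to $1$ on $M$ and $0$ on $N$ lies in $\C\oplus\D$, because $e_M|_M=1\in\C$ and $e_M|_N=0\in\D$; likewise $e_N=1-e_M$. First I would record the algebraic relations $e_M^2=e_M$, $e_N^2=e_N$, $e_Me_N=0$, and $e_M+e_N=1$. Applying an arbitrary $\chi\in\Spec(\C\oplus\D)$ to the first and last of these gives $\chi(e_M)^2=\chi(e_M)$ and $\chi(e_M)+\chi(e_N)=\chi(1)=1$, which forces $\chi(e_M)\in\{0,1\}$ and exactly one of $\chi(e_M),\chi(e_N)$ to equal $1$. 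By the symmetry between $M$ and $N$ it then suffices to treat the case $\chi(e_M)=1$, $\chi(e_N)=0$.

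Next I would manufacture a homomorphism on $\C$ from $\chi$. Define the zero-extension $j\colon\C\to\C\oplus\D$ by $j(g)|_M=g$ and $j(g)|_N=0$; this indeed lands in $\C\oplus\D$, and it is a (non-unital) ring homomorphism, since both the additive and multiplicative identities are checked separately on $M$ and on $N$. Hence $\chi_M:=\chi\circ j$ is a homomorphism $\C\to\mR$, and it preserves unity because $j(1)=e_M$ and $\chi(e_M)=1$, so $\chi_M\in\Spec\C$. (No well-definedness issue arises, since $j$ is a genuine map; one could also note that any two extensions of $g$ to $\C\oplus\D$ differ by a multiple of $e_N$, which $\chi$ annihilates.) By the assumed smooth real-compactness of $(M,\C)$ there is a point $p\in M$ with $\chi_M=\ev_p$.

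Finally I would conclude that $\chi=\ev_p$. For an arbitrary $f\in\C\oplus\D$ write $f=fe_M+fe_N$. Then $\chi(fe_N)=\chi(f)\chi(e_N)=0$, while $fe_M=j(f|_M)$ (it equals $f|_M$ on $M$ and $0$ on $N$), so $\chi(fe_M)=\chi_M(f|_M)=f|_M(p)=f(p)$ because $p\in M$. Therefore $\chi(f)=f(p)=\ev_p(f)$, and $\chi=\ev_p$; the case $\chi(e_N)=1$ is identical with the roles of $M$ and $N$ exchanged, yielding a point of $N$. The main thing to get right is the idempotent dichotomy together with the verification that the zero-extension $j$ is a homomorphism into the direct-sum structure; once that is in place, smooth real-compactness of the two summands does all the remaining work, so I do not expect a serious obstacle here.
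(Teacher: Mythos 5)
Your proposal is correct and follows essentially the same route as the paper: the idempotents $e_M=(1,0)$ and $e_N=(0,1)$ give the dichotomy $\chi(e_M)\in\{0,1\}$, the zero-extension identifies $\C$ with the subalgebra of elements $(f,0)$, and smooth real-compactness of the relevant summand produces the evaluation point. Your write-up is in fact slightly cleaner in the final step, where the paper's concluding formula $(f,g)(p)=f(p)+g(p)$ is a small slip that your $\chi(f)=f(p)$ avoids.
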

\begin{proof}
Elements of the algebra $\C\oplus \D$ are pairs $(f,g)$ where $f\in
\C$ and $g\in \D$. Let $\chi\in \Spec(\C\oplus\D)$. We shall show
that it is  evaluation at some point $p\in M\cup N$. From the
equalities $(0,1)+(1,0)=(1,1)$ and $(0,1)(1,0)=(0,0)$ we obtain two
cases:\\
\begin{eqnarray}
1)\quad \chi((1,0))=1 \quad and \quad \chi((0,1))=0\nonumber\\
2)\quad \chi((1,0))=0\quad and  \quad \chi((0,1))=1
\nonumber\end{eqnarray} Since every function from $\C\oplus \D$ can
be uniquely decomposed as $(f,g)=(f,0)(1,0)+(0,g)(0,1)$ the
homomorphism $\chi$ acts as
follows:\\
$\chi((f,g))=\chi((f,0))\chi((1,0))+\chi((0,g))\chi((0,1))$. In case
1) we will get \\ $\chi((f,g))=\chi((f,0))$ and in case 2),
$\chi((f,g))=\chi((0,f))$.\\ The algebra of functions of the form
$((f,0))\in \C\oplus\D$ is isomorphic to $\C$. Therefore a
homomorphisms  $\psi\in \Spec\C$ can be extended to a homomorphism
from $\C\oplus\D$ by the formula $\bar\psi((f,g))=\psi(f)$. All the
homomorphisms in case 1) are of this form. Therefore in case 1) the
homomorphism $\chi((f,g))=\psi(f)$ where $\psi\in \Spec\C$ is such
that $\bar\psi=\chi$. But since the space $(M,\C)$ is smoothly
real-compact there exists a point $p\in M$ such that $\psi=ev_p$.
Then we can write $\chi((f,g))=ev_p((f,g))=(f,g)(p)=f(p)+g(p)$ for
$p\in M\cup N$. We have shown that in case 1) the homomorphism
$\chi$ is an evaluation. For  case 2) the proof is analogous.
\end{proof}

\begin{defn}
We denote by $\tilde{\V}$  the differential structure on $\mR^\mN$
generated by the set $\C_0=\{\pi_i: i\in \mN\}\cup \{\theta_p\}$,
where $\theta_p$ is the characteristic function of the point $p\in
M$.
\end{defn}
\begin{lem}
The differential space $(\mR^\mN,\tilde{\V})$ is smoothly
real-compact.
\end{lem}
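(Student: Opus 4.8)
The plan is to use the extra generator $\theta_p$ to isolate the point $p$ topologically, and thereby split $(\mR^\mN,\tilde{\V})$ into a disjoint union of two spaces that have already been shown to be smoothly real-compact. The guiding observation is that, since $\theta_p\in\tilde{\V}$, the characteristic function of $p$ is continuous in $\tau_{\tilde{\V}}$, so both $\{p\}=\theta_p^{-1}((\tfrac12,\tfrac32))$ and $\mR^\mN-\{p\}=\theta_p^{-1}((-\tfrac12,\tfrac12))$ are open; hence $\{p\}$ and its complement are clopen in $\tau_{\tilde{\V}}$. This is exactly the feature that $(\mR^\mN,\V_\mN)$ lacks, and it is what forces the splitting.

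First I would verify the disjoint-union decomposition $(\mR^\mN,\tilde{\V})=(\{p\},\tilde{\V}_{\{p\}})\oplus(\mR^\mN-\{p\},\tilde{\V}_{\mR^\mN-\{p\}})$. Because both pieces are clopen, closedness of $\tilde{\V}$ under localization shows that a function lies in $\tilde{\V}$ precisely when its restrictions to the two pieces lie locally in the respective induced structures: at $p$ one may use the open neighborhood $\{p\}$ together with a constant function, and at any $q\neq p$ one uses a $\tau_{\tilde{\V}}$-neighborhood contained in the open set $\mR^\mN-\{p\}$. This matches the defining property of $\tilde{\V}_{\{p\}}\oplus\tilde{\V}_{\mR^\mN-\{p\}}$ from the disjoint-union definition.

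Next I would identify the two summands. The one-point space $(\{p\},\tilde{\V}_{\{p\}})$ carries only constant functions, so its spectrum consists of the single homomorphism $\ev_p$, and it is trivially smoothly real-compact. For the complement I claim $\tilde{\V}_{\mR^\mN-\{p\}}=(\V_\mN)_{\mR^\mN-\{p\}}$: since $\V_\mN\subseteq\tilde{\V}$ the inclusion $\supseteq$ is immediate, while for $\subseteq$ one notes that $\theta_p$ restricts to the constant $0$ on $\mR^\mN-\{p\}$, so in any local representation $\omega\circ(g_1,\dots,g_n)$ of a function of $\tilde{\V}$ every occurrence of $\theta_p$ may be replaced by $0$, leaving a function of the projections alone, i.e.\ a local restriction of a $\V_\mN$-function. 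The preceding Corollary establishing that $(\mR^\mN-\{p\},(\V_\mN)_{\mR^\mN-\{p\}})$ is smoothly real-compact then applies to this summand. Finally I would invoke the preceding Lemma on disjoint unions of smoothly real-compact spaces to conclude that $(\mR^\mN,\tilde{\V})$ is smoothly real-compact.

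The main obstacle I expect is the bookkeeping in the identification $\tilde{\V}_{\mR^\mN-\{p\}}=(\V_\mN)_{\mR^\mN-\{p\}}$, where one must check that the subspace topologies $\tau_{\tilde{\V}}$ and $\tau_{\V_\mN}$ agree on $\mR^\mN-\{p\}$ (they do, because $\theta_p|_{\mR^\mN-\{p\}}\equiv 0$ contributes no new open sets there) so that the localization charts used to eliminate $\theta_p$ are genuinely available. An alternative, more computational route avoids the topology by working directly with $\chi\in\Spec\tilde{\V}$: idempotency $\theta_p^2=\theta_p$ forces $\chi(\theta_p)\in\{0,1\}$, and the relation $\theta_p\cdot(\pi_i-p_i)=0$ then pins down $\chi$ as $\ev_p$ when $\chi(\theta_p)=1$ and reduces to the $\mR^\mN-\{p\}$ case (via Lemma \ref{lem:5}) when $\chi(\theta_p)=0$; but the disjoint-union argument is cleaner and reuses the machinery already in place.
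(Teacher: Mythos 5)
Your proposal is correct and follows essentially the same route as the paper: decompose $(\mR^\mN,\tilde{\V})$ as the disjoint union of the singleton $\{p\}$ and $(\mR^\mN-\{p\},(\V_\mN)_{\mR^\mN-\{p\}})$, then apply the preceding corollary and the lemma on disjoint unions. The paper declares the identification $\tilde{\V}=(\V_\mN)_{\mR^\mN-\{p\}}\oplus F(p)$ to be obvious, whereas you supply the justification (clopenness of $\{p\}$ via $\theta_p$, and elimination of $\theta_p$ from local representations on the complement); this is a welcome amplification, not a different argument.
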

\begin{proof}
We can decompose the space $(\mR^\mN,\tilde{\V})$ into the direct
sum of the spaces $(\mR^\mN-\{p\},(\V_\mN)_{\mR^\mN-\{p\}})$ and
$(\{p\}, F(p))$ where $F(p)$ is the algebra of all functions on the
singleton space. From the definition of $(\mR^\mN,\tilde{\V})$ it is
obvious that $\mR^\mN=\{p\}\cup (\mR^\mN-\{p\})$ and
$\tilde{\V}=(\V_\mN)_{\mR^\mN-\{p\}}\oplus F(p)$. Both spaces in the
direct sum are smoothly real-compact so the space
$(\mR^\mN,\tilde{\V})$ is smoothly real-compact.
\end{proof}
Now we present the main results:
\begin{thm}
Any differential subspace of $(\mR^\mN,\V_\mN)$ is smoothly
real-compact.\label{thm:sub}
\end{thm}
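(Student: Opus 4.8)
The plan is to use the generator machinery already in place and reduce everything to showing that the point determined by a homomorphism on the coordinate generators actually lies in $M$. Let $(M,\C_M)$ be a differential subspace of $(\mR^\mN,\V_\mN)$, so $\C_M=(\V_\mN)_M$ is generated by $\C_0=\{\pi_i|_M:i\in\mN\}$. Fix $\chi\in\Spec\C_M$ and set $p_i:=\chi(\pi_i|_M)$, giving a point $p=(p_i)\in\mR^\mN$. By Lemma \ref{lem:uniq} the homomorphism $\chi$ is completely determined by $p$, and once we know $p\in M$ the equalities $\chi(\pi_i|_M)=p_i=ev_p(\pi_i|_M)$ together with Lemma \ref{lem:5} give $\chi=ev_p$. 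So the whole task reduces to excluding the two possibilities $p\notin\overline M$ and $p\in\overline M\setminus M$, where $\overline M$ denotes closure in the product topology of $\mR^\mN$.

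First I would rule out $p\notin\overline M$. Then some basic neighborhood $\{x:|x_{i_1}-p_{i_1}|<\V,\dots,|x_{i_k}-p_{i_k}|<\V\}$ of $p$ misses $M$, so the function $g:=\sum_{j=1}^k(\pi_{i_j}|_M-p_{i_j})^2\in\C_M$ satisfies $g\ge\V^2>0$ on $M$, whence $1/g\in\C_M$. By Lemma \ref{lem:comp}, $\chi(g)=\sum_j(\chi(\pi_{i_j}|_M)-p_{i_j})^2=0$, so $1=\chi(1)=\chi(g)\chi(1/g)=0$, a contradiction. This is precisely the argument of Lemma \ref{lem:findimsub}, and it forces $p\in\overline M$.

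The substantive case is $p\in\overline M\setminus M$, which I would exclude by exhibiting a function of $\C_M$ that is not continuously prolongable to $p$; this contradicts Corollary \ref{cor:prelong}, since the existence of $\chi$ would force every function of $\C_M$ to be prolongable to $p$. Applying the translation $x\mapsto x-p$, a diffeomorphism of $(\mR^\mN,\V_\mN)$ carrying $(M,\C_M)$ isomorphically onto the corresponding subspace, I may assume $p=0$, so $0\in\overline M\setminus M$ and $M\subseteq\mR^\mN\setminus\{0\}$. Then the function $\xi$ of Lemma \ref{lem:prelong1} lies in $(\V_\mN)_{\mR^\mN\setminus\{0\}}$, and its restriction $\xi|_M$ belongs to $\C_M$. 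The crucial feature of $\xi$ is that $\xi(x)\ge N$ whenever $\rho_N(x)=\sum_{i=1}^N x_i^2\le\frac1{2N^2}$, which holds as soon as $|x_i|\le\frac1{\sqrt2\,N^{3/2}}$ for $i=1,\dots,N$. Since $0\in\overline M$, the basic neighborhood $V_N=\{x:|x_i|<\frac1{\sqrt2\,N^{3/2}},\ i\le N\}$ meets $M$, so I choose $x^{(N)}\in V_N\cap M$, and then $\xi(x^{(N)})\ge N$.

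It remains to observe that $x^{(N)}\to 0$ in $\mR^\mN$: for each fixed coordinate $i$, once $N\ge i$ one has $|x^{(N)}_i|<\frac1{\sqrt2\,N^{3/2}}\to 0$, and coordinatewise convergence is exactly convergence in the product topology. Hence $x^{(N)}\to 0$ while $\xi(x^{(N)})\to+\infty$, so $\xi|_M$ admits no continuous prolongation to $0=p$. This contradiction eliminates the case $p\in\overline M\setminus M$, and the argument is complete. I expect the delicate point to be precisely this final construction: one controls only the first $N$ coordinates of $x^{(N)}$, leaving the tail entirely free, yet the coordinatewise nature of the product topology still yields $x^{(N)}\to p$. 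Forcing the blow-up of $\xi$ along a sequence lying inside $M$, rather than along the auxiliary sequence of Lemma \ref{lem:prelong1}, is what makes an arbitrary $p\in\overline M\setminus M$ genuinely unreachable by a homomorphism.
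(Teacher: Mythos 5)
Your proof is correct, but it takes a genuinely different route from the paper's. The paper never looks at the closure of $M$: it first gets $p$ from the smooth real-compactness of $(\mR^\mN,\V_\mN)$, then, assuming $p\notin M$, views $(M,\C)$ inside the auxiliary space $(\mR^\mN,\tilde{\V})$ whose structure is generated by the projections together with the characteristic function $\theta_p$; the smooth real-compactness of that space (via the disjoint-union lemma and the smooth real-compactness of $\mR^\mN-\{p\}$) forces $\chi\circ\nu_M^*=ev_p|_{\tilde{\V}}$, and the single function $\theta_p$ delivers the contradiction $1=ev_p(\theta_p)=\chi(\theta_p|_M)=\chi(0)=0$. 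There the non-prolongable function $\xi$ enters only indirectly, to prove $\mR^\mN-\{p\}$ smoothly real-compact. You instead dispense with $\tilde{\V}$ and $\theta_p$ entirely, split on $p\notin\overline M$ versus $p\in\overline M\setminus M$, kill the first case with the finite-dimensional $1/\omega$ trick of Lemma \ref{lem:findimsub}, and kill the second by a quantitatively sharper use of $\xi$: the estimate $\xi\geq N$ on the basic neighbourhood $V_N$ lets you force the blow-up along a sequence lying \emph{inside} the given $M$, not merely along the special sequence $z_k$ of Lemma \ref{lem:prelong1}, and this contradicts the prolongability that Lemma \ref{lem:prelong} and Corollary \ref{cor:prelong} extract from the existence of $\chi$. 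What the paper's route buys is the absence of any case analysis or estimate on $\xi$, at the cost of the auxiliary structure and the disjoint-union machinery; what your route buys is a more self-contained and more transparent final step that isolates the real content (namely that $\xi$ diverges along \emph{every} subset accumulating at the deleted point), though it leans on Lemma \ref{lem:prelong}, whose continuity claim the paper argues only informally. Both arguments ultimately rest on the construction of $\xi$ in Lemma \ref{lem:prelong1}, and your verification of the estimate $\rho_N(x)\leq\tfrac{1}{2N^2}\Rightarrow\xi(x)\geq N$ and of coordinatewise convergence in the product topology is sound.
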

\begin{proof}
Let $\iota_M:(M,\C)\rightarrow (\mR^\mN,\V_\mN)$ be the inclusion
mapping. For any $\chi\in \Spec\C$,  $\chi\circ\iota_M^*\in
\Spec(\V_\mN)$. The space $(\mR^\mN,\V_\mN)$ is smoothly
real-compact so there is $p\in \mR^\mN$ such that
$\chi\circ\iota_M^*=ev_p|_{\V_\mN}$.

We need to show that $p\in M$. Assume that $p\notin M$. We can treat
the space $(M,\C)$ as a differential subspace of
$(\mR^\mN,\tilde{\V})$. Let $\nu_M:(M,\C)\rightarrow
(\mR^\mN,\tilde{\V})$ be the inclusion. Then $\chi\circ \nu_M^*\in
\Spec\tilde\V$. Because the space $(\mR^\mN,\tilde{\V})$ is smoothly
real-compact there exists a point $q\in \mR^\mN$ such that
$\chi\circ\nu_M^*=ev_q|_{\tilde{\V}}$. We know that on common
generators $\pi_i$ the equalities $\chi(\pi_i|_M)=ev_p(\pi_i)=p_i$
and $\chi(\pi_i|_M)=ev_q(\pi_i)=q_i$ holds for all $i\in \mN$. This
specifies all the coordinates so $p=q$. Therefore we can write
$\chi\circ\nu_M^*=ev_p|_{\tilde{\V}}$. So
$(\chi\circ\nu_M^*)(\theta_p)=ev_p(\theta_p)=1$. We have a
contradiction with the fact that
$(\chi\circ\nu_M^*)(\theta_p)=\chi(\theta_p|_M)=\chi(0)=0$. We see
that $p\in M$ and $\chi\circ \iota_M^*=ev_p|_{\V_\mN}$. So
$\chi(\pi_i|_M)=ev_p(\pi_i|M)$ for all $i\in \mN$. The set
$\{\pi_i:i\in \mN\}$ is the set of generators of the differential
space $(M,\C)$. We derive that $\chi=ev_p$.
\end{proof}
\begin{cor}
Any countably generated differential space is smoothly real-compact.
\end{cor}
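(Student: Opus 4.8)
The plan is to reduce the statement directly to Theorem~\ref{thm:sub} by passing through the generator embedding supplied by Lemma~\ref{lem:embed}. Suppose $(M,\C)$ is a countably generated differential space, so that $\C=\Gen\C_0$ for some countable set of generators $\C_0$. If $\C_0$ happens to be finite, the space is already covered by Corollary~\ref{cor:sub}, so the interesting case is when $\C_0$ is genuinely countably infinite; then I would fix a bijection and identify the index set $I=|\C_0|$ with $\mN$. This identifies the target $\mR^{\C_0}$ of the generator embedding $\phi:M\rightarrow\mR^{\C_0}$ with $\mR^\mN$ equipped with the structure $\V_\mN$ generated by the projections.

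Next I would invoke Lemma~\ref{lem:embed} with $I=\mN$: it asserts that $(M,\C)$ is smoothly real-compact if and only if the image space $(\phi(M),(\V_\mN)_{\phi(M)})$ is smoothly real-compact. The point of this equivalence is that it does not matter whether or not the generators separate the points of $M$; both alternatives are already handled inside the proof of that lemma via the surjection $\bar\phi$ and the algebra isomorphism $F^*$. So it suffices to establish smooth real-compactness of the image space. But $(\phi(M),(\V_\mN)_{\phi(M)})$ is by construction a differential subspace of $(\mR^\mN,\V_\mN)$, since $\phi(M)\subseteq\mR^\mN$ and its structure is the localization $(\V_\mN)_{\phi(M)}$. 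Theorem~\ref{thm:sub} then applies verbatim and yields that this subspace is smoothly real-compact.

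Combining the two ingredients finishes the argument: Theorem~\ref{thm:sub} gives smooth real-compactness of $(\phi(M),(\V_\mN)_{\phi(M)})$, and the "if" direction of Lemma~\ref{lem:embed} transports this property back to $(M,\C)$. Since $(M,\C)$ was an arbitrary countably generated differential space, the corollary follows. I do not expect a genuine obstacle here, as all the heavy lifting was done in Theorem~\ref{thm:sub}; the only points requiring care are bookkeeping ones, namely the identification $|\C_0|\cong\mN$ and the observation that the generator embedding really does land inside a differential subspace of the model space $(\mR^\mN,\V_\mN)$ so that Theorem~\ref{thm:sub} is applicable without modification.
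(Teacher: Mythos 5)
Your proposal is correct and follows essentially the same route as the paper, which simply treats the countably generated space as a subspace of $(\mR^\mN,\V_\mN)$ via the generator embedding and applies Theorem~\ref{thm:sub}. Your version is somewhat more careful in explicitly invoking Lemma~\ref{lem:embed} to handle the case where the generators do not separate points, which the paper's one-line proof leaves implicit.
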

\begin{proof}
Every countably generated differential space can be treated as a
subspace of  $(\mR^\mN,\V_\mN)$. From  Theorem \ref{thm:sub} we know
that all subspaces of this space are smoothly real-compact.
\end{proof}
\section{Conclusion}
We have shown how a real-valued homomorphism act on the algebra of
smooth functions in a differential space. It is sufficient to
examine its value on the generators of the differential structure. A
non-prolongable function on a differential space of sequences
without zero sequence is constructed. From the existence of this
function we deduced that countably generated structural algebra $\C$
of a differential space $(M,\C)$ gives all information about the set
$M$ because the set $\Spec\C$ contains only real valued
homomorphisms of the  form $ev_p$ for $p\in M$. Therefore the
geometry of such spaces can be built on their algebras. The choice
of generators of the differential  structure is not unique so it is
important to choose the smallest one. It is also shown that the pair
$(\Spec\C, \hat{\C})$ is a differential space for any differential
space $(M,\C)$. For countably generated differential spaces,
$(M,\C)$ and $(\Spec\C,\hat{\C})$ are diffeomorphic. Theorem
\ref{thm:sub} has been obtained as a result of observations about
generators. From \cite{adam} and \cite{kriegl1} it also follows as a
conclusion from a much wider theory. In  \cite{ran} there is an
important theorem which gives a sufficient condition for an algebra
of functions on an arbitrary topological space to be smoothly
real-compact. We could easily show that countably generated
differential spaces satisfy this condition. But we have presented
our proof using techniques of differential space theory.
\newpage

\end{document}